\newcommand{\F}{\mathbb{F}}
\newcommand{\PP}{\mathbb{P}}
\newcommand{\Q}{\mathbb{Q}}
\newcommand{\Z}{\mathbb{Z}}
\newcommand{\cP}{\mathscr{P}}
\newcommand{\cQ}{\mathcal{Q}}
\newcommand{\cA}{\mathcal{A}}
\newcommand{\cB}{\mathcal{B}}
\newcommand{\cC}{\mathscr{C}}
\newcommand{\cF}{\mathcal{F}}
\newcommand{\cN}{\mathcal{N}}
\newcommand{\fp}{\mathfrak{p}}
\newcommand{\fP}{\mathfrak{P}}
\newcommand{\OO}{\mathcal{O}}
\DeclareMathOperator{\GSp}{GSp}
\DeclareMathOperator{\Aut}{Aut}
\DeclareMathOperator{\Div}{Div}
\DeclareMathOperator{\End}{End}
\DeclareMathOperator{\Gal}{Gal}
\DeclareMathOperator{\Trace}{Trace}
\DeclareMathOperator{\rank}{rank}
\DeclareMathOperator{\re}{Re}
\DeclareMathOperator{\Sp}{Sp}
\DeclareMathOperator{\Spec}{Spec}
\renewcommand{\setminus}{-}
\newtheorem{thm}{Theorem}
\newtheorem{lem}[thm]{Lemma}
\newtheorem{conj}[thm]{Conjecture}
\newtheorem{cor}[thm]{Corollary}
\newtheorem{prop}[thm]{Proposition}
\theoremstyle{definition}
\newtheorem{example}[thm]{Example}
\theoremstyle{remark}
\definecolor{darkgreen}{rgb}{0,0.5,0}
\DeclareRobustCommand{\SkipTocEntry}[5]{}
\begin{document}

\title[Integral points on punctured abelian varieties]{
Integral points on punctured abelian varieties
}

\begin{abstract}
Let $A/\Q$ be an abelian variety such that $A(\Q)=0$.
	Let $\ell$ and $p$ be rational primes,
	such that $A$ has good reduction at $p$,
	and satisfying $\ell \equiv 1 \pmod{p}$ and
	$\ell \nmid \# A(\F_p)$. 
	Let $S$ be a finite set of rational primes.
	We show
	that $(A-0)(\OO_{L,S})=\emptyset$
	for 100\% of cyclic degree $\ell$ fields
	$L/\Q$, when ordered by conductor, or by absolute discriminant.
\end{abstract}

\author{Samir Siksek}

\address{Mathematics Institute\\
    University of Warwick\\
    CV4 7AL \\
    United Kingdom}

\email{s.siksek@warwick.ac.uk}

\date{\today}
\thanks{
Siksek is supported by the
EPSRC grant \emph{Moduli of Elliptic curves and Classical Diophantine Problems}
(EP/S031537/1).}
\keywords{Abelian varieties, cyclic fields, integral points}
\subjclass[2010]{Primary 11G10, Secondary 11G05}

\maketitle

\section{Introduction}

Let $L$ be a number field and write
$\OO_L$ for its ring of integers.
Let $S$ be a finite set of places of $L$,
and write $\OO_{L,S}$ for the ring of $S$-integers in $L$.
Let $A$ be an abelian variety over $L$.
A theorem of Silverman \cite{SilvermanIntegral}
asserts the existence of a very ample divisor
$D \in \Div(A)$ such that
the set of $S$-integral points $(A-D)(\OO_{L,S})$
is finite.
Write $0 \in A$ for  the origin. 
If $\dim(A)=1$, then the
finiteness of $(A-0)(\OO_{L,S})$
is a famous theorem of Siegel \cite[Section IX.3]{SilvermanAEC}. 
Little
is known about the integral points on $A-0$
for $\dim(A) \ge 2$.
A special case of the \emph{Arithmetic Puncturing Problem}
of Hassett and Tschinkel \cite[Problem 2.13]{HT}
asks whether the integral points on $A-0$
are potentially dense. This note explores
an obstruction to the existence of integral
points on $A-0$.

For a finite prime $\fP$ of $\OO_L$
we denote the residue field by $\F_\fP=\OO_L/\fP$,
and the completion of $L$ at $\fP$ by $L_\fP$.
If $A$ has good reduction at $\fP$
we will write
$A^1(L_\fP)$ for the kernel of the reduction
map $A(L_\fP) \rightarrow A(\F_\fP)$.

\begin{thm}\label{thm:main}
Let $K$ be a number field, 
and let $A$ be an abelian variety defined over $K$
	satisfying $A(K)=0$.
Let $\fp$
be a finite prime of $\OO_K$ of good reduction for $A$.
Let $L/K$ be an  extension of degree $m$.
Suppose that
	\begin{enumerate}[(i)]
\item $\fp$ is totally ramified in $L$;
\item $\gcd(\# A(\F_\fp),m)=1$.
\end{enumerate}
Then $A(L) \subseteq A^1(L_\fP)$
where 
 $\fP$ be the unique prime of $\OO_L$ above $\fp$.
	In particular, $(A\setminus 0)(\OO_{L,S})=\emptyset$,
	for any set of places $S$ not containing $\fP$.
\end{thm}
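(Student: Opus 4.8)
The plan is to establish the inclusion $A(L)\subseteq A^1(L_\fP)$; the emptiness of $(A\setminus 0)(\OO_{L,S})$ for $\fP\notin S$ then follows at once, since $A_L:=A\times_K L$ inherits good reduction at $\fP$ and an $S$-integral point of $A\setminus 0$ with $\fP\notin S$ must reduce to something nonzero at $\fP$, while membership in $A^1(L_\fP)$ says exactly that its reduction vanishes. So fix $P\in A(L)$. By hypothesis~(i) the residue field $\F_\fP=\OO_L/\fP$ equals $\F_\fp$, and it is enough to show that the reduction $\overline{P}\in A(\F_\fP)=A(\F_\fp)$ is $0$.

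The idea is to build a $K$-rational point out of $P$ by taking a trace. Let $M/K$ be the Galois closure of $L/K$, set $G=\Gal(M/K)$ and $H=\Gal(M/L)$, and fix coset representatives $\sigma_1,\dots,\sigma_m$ for $H$ in $G$. Then $T:=\sum_{i=1}^m\sigma_i(P)\in A(M)$ is independent of the chosen representatives (because $P$ is fixed by $H$) and is $G$-invariant (because $G$ permutes the cosets $\sigma_iH$), so $T\in A(M)^G=A(K)=0$, i.e.\ $T=0$. Now fix a prime $\fP_M$ of $\OO_M$ above $\fp$ and let $\iota\colon\F_\fP\hookrightarrow\F_{\fP_M}$ be the inclusion of residue fields. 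I would reduce the identity $T=0$ modulo $\fP_M$; the claim I would aim to prove is that, for every $i$, the reduction of $\sigma_i(P)$ modulo $\fP_M$ equals $\iota(\overline{P})$. Granting this, since reduction is a group homomorphism we obtain $m\cdot\iota(\overline{P})=0$ in $A(\F_{\fP_M})$, hence $m\cdot\overline{P}=0$ in $A(\F_\fp)$ (as $A(\F_\fp)\hookrightarrow A(\F_{\fP_M})$); together with $\#A(\F_\fp)\cdot\overline{P}=0$, hypothesis~(ii) and a B\'ezout argument give $\overline{P}=0$.

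I expect the crux to be the verification of the claim just made, which is where hypothesis~(i) enters essentially. By functoriality of reduction, the reduction of $\sigma_i(P)$ modulo $\fP_M$ is the reduction of $P$ modulo $\sigma_i^{-1}(\fP_M)$, transported by the residue-field isomorphism induced by $\sigma_i$; and since $\sigma_i^{-1}(\fP_M)$ lies over $\fp$, it restricts on $L$ to the unique prime $\fP$, so that reduction is $\mu_i(\overline{P})$ for the field embedding $\mu_i\colon\F_\fP\to\F_{\fP_M}$ that sends the class of $x\in\OO_L$ to the class of $\sigma_i(x)$. To see that $\mu_i=\iota$: given $x\in\OO_L$, write $x\equiv a\pmod{\fP}$ with $a\in\OO_K$, which is possible precisely because total ramification makes $\OO_K/\fp\to\OO_L/\fP$ surjective; since $\sigma_i$ fixes $\OO_K$ pointwise and carries $\sigma_i^{-1}(\fP_M)$ to $\fP_M$, we get $\sigma_i(x)=a+\sigma_i(x-a)\equiv a\equiv x\pmod{\fP_M}$, as needed. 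The same computation can alternatively be carried out locally, replacing $M$ by the Galois closure of $L_\fP/K_\fp$ and the global trace by the local one, using that $\fp$ has a unique prime above it so that $L\otimes_K K_\fp\isom L_\fP$.

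Finally, for the "in particular" clause: if $\fP\notin S$ and $P\in(A\setminus 0)(\OO_{L,S})$, then $P\in A(L)$ and $P$ is $\fP$-integral on $A\setminus 0$; as $A_L$ has good reduction at $\fP$, this forces $\overline{P}\neq 0$ in $A(\F_\fP)$, contradicting $P\in A^1(L_\fP)$. Hence $(A\setminus 0)(\OO_{L,S})=\emptyset$.
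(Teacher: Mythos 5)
Your proof is correct, and its skeleton is the paper's: trace $P$ down to $A(K)=0$, show every conjugate of $P$ reduces to the same point as $P$ at a prime above $\fp$, deduce $m\overline{P}=0$ in $A(\F_\fp)$, and finish with $\gcd(m,\#A(\F_\fp))=1$. The execution, however, is genuinely different in the non-Galois case. The paper passes to completions: it notes $L_\fP/K_\fp$ is totally ramified of degree $m$ and proves a local trace congruence $\Trace_{L/K}Q\equiv mQ \pmod{\Pi}$ (Lemma~\ref{lem:dodgy2}), whose proof first reduces to $L=K(Q)$ so that the Galois orbit of $Q$ can be viewed as a closed $K$-point extending to an $\OO_K$-point of the abelian scheme, and then argues coordinate-by-coordinate on an affine patch using the fact that conjugates over a local field have equal absolute value (Lemma~\ref{lem:dodgy1}). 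You instead stay global: you pick a prime $\fP_M$ of the Galois closure $M/K$ above $\fp$, use functoriality of reduction under $\Gal(M/K)$ together with the congruence $\sigma_i(x)\equiv x\pmod{\fP_M}$ for $x\in\OO_L$ — which you justify exactly in the spirit of Lemma~\ref{lem:dodgy1}, writing $x\equiv a\pmod{\fP}$ with $a\in\OO_K$ (total ramification gives $\F_\fP=\F_\fp$) and noting $x-a\in\fP\subseteq\sigma_i^{-1}(\fP_M)$ — and then descend from $A(\F_{\fP_M})$ to $A(\F_\fp)$ by injectivity. In effect you extend the paper's short Galois-case proof from the introduction to arbitrary $L/K$ via the Galois closure, avoiding both the reduction to $L=K(Q)$ and the closed-point/affine-patch bookkeeping; what the paper's route buys in exchange is a freestanding local statement (the trace congruence in $A(L)$ modulo $\Pi$, before reduction), whereas your argument only produces the identity after reducing at $\fP_M$. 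Both rely on the same standard inputs (the abelian scheme model, properness, reduction being a group homomorphism), and your treatment of the ``in particular'' clause matches the paper's implicit argument.
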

\begin{proof}[Proof for $L/K$ Galois]
The theorem is proved in Section~\ref{sec:proofthmmain}.
However, when $L/K$ is Galois, the theorem admits a shorter and more conceptual
proof, which we now give.
Recall that the inertia subgroup $I_\fP \subseteq \Gal(L/K)$
is by definition the subset of $\sigma \in \Gal(L/K)$
	such that $\sigma(\alpha) \equiv \alpha \pmod{\fP}$
	for all $\alpha \in \OO_L$. 
Since $\fp$ is totally ramified, 
we have $I_\fP=\Gal(L/K)$. We deduce that
$\sigma(Q) \equiv Q \pmod{\fP}$
for all $Q \in A(L)$ and all $\sigma \in \Gal(L/K)$.
Thus
\[
	\Trace_{L/K}(Q) \; = \; \sum_{\sigma \in \Gal(L/K)} \sigma(Q)  \; \equiv \; m Q \pmod{\fP}.
\]
	However, $\Trace_{L/K}(Q) \in A(K)=0$ by assumption. Thus $mQ \equiv 0 \pmod{\fP}$.
	Now, again as $\fp$ is totally ramified, $\F_\fP=\F_\fp$, and so
	$A(\F_\fP)=A(\F_\fp)$. By assupmotion (ii) we have $Q \equiv 0 \pmod{\fP}$
	completing the proof.
\end{proof}


\begin{cor}\label{cor:C}
Let $C/K$ be a curve of genus $\ge 1$,
	and let $Q_0 \in C(K)$.
	Let $J$ be the Jacobian of $C$
	and suppose $J(K)=0$.
Let $\fp$
be a finite prime of $\OO_K$ of good reduction for $C$.
Let $L/K$ be an extension of degree $m$.
Suppose that
	\begin{enumerate}[(i)]
\item $\fp$ is totally ramified in $L$;
\item $\gcd(\# J(\F_\fp),m)=1$.
\end{enumerate}
	Then $(C\setminus \{Q_0\})(\OO_{L,S})=\emptyset$
	for any set of places $S$ not containing $\fP$.
\end{cor}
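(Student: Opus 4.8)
The plan is to deduce Corollary~\ref{cor:C} from Theorem~\ref{thm:main} applied to the abelian variety $A = J$, transporting points between $C$ and $J$ along the Abel--Jacobi embedding based at $Q_0$. First I would record that good reduction of $C$ at $\fp$ forces good reduction of $J$ at $\fp$: from a smooth proper model $\mathcal{C}/\OO_{K,\fp}$ one gets the abelian scheme $\Pic^0_{\mathcal{C}/\OO_{K,\fp}}$, which is a model of $J$. Hypotheses (i) and (ii) of the corollary then reproduce verbatim the hypotheses of Theorem~\ref{thm:main} for $J$, so the theorem gives $J(L) \subseteq J^1(L_\fP)$; equivalently, every $L$-point of $J$ reduces to $0$ modulo $\fP$.

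Next I would bring in the Abel--Jacobi morphism $\iota\colon C \to J$, $Q \mapsto [Q - Q_0]$. Since $C$ has genus $\ge 1$, $\iota$ is a closed immersion (an isomorphism onto $J$ when the genus is $1$), it is injective on points, and $\iota(Q_0) = 0$; hence $\iota^{-1}(0) = \{Q_0\}$ and $\iota$ restricts to a closed immersion $C \setminus \{Q_0\} \injects J \setminus 0$. The heart of the argument is then: given a point $P \in (C \setminus \{Q_0\})(\OO_{L,S})$ with $\fP \notin S$, its image $R := \iota(P) = [P - Q_0] \in J(L)$ does \emph{not} reduce to $0$ modulo $\fP$, contradicting the previous paragraph. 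To see this, note that $P$ being $S$-integral on the punctured curve forces the reduction of $P$ modulo $\fP$ to differ from that of $Q_0$; since $\fp$ is totally ramified in $L$ we have $\F_\fP = \F_\fp$, so $\bar P \ne \bar Q_0$ in $C(\F_\fp)$. As reduction at $\fp$ commutes with $\iota$ and $\iota \bmod \fp$ is again the (injective, since genus $\ge 1$) Abel--Jacobi map of $C_{\F_\fp}$, we get $\bar R = [\bar P - \bar Q_0] \ne 0$, i.e.\ $R \notin J^1(L_\fP)$ --- the desired contradiction. Hence $(C \setminus \{Q_0\})(\OO_{L,S}) = \emptyset$.

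The one step I expect to need genuine care is the integral-model bookkeeping used above: that ``integral away from $Q_0$ on $C$'' really does push forward to ``integral away from $0$ on $J$'', and that $\iota$ is compatible with reduction modulo $\fp$. Both follow from good reduction: the relative Jacobian $\Pic^0_{\mathcal{C}/\OO_{K,\fp}}$ is formed compatibly with base change, so $\iota$ spreads out to a morphism of $\OO_{K,\fp}$-models carrying the section $Q_0$ to the zero section, and the reduction statements are read off on special fibres. Everything else is formal once Theorem~\ref{thm:main} is available.
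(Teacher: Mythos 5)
Your proof is correct and takes essentially the same route as the paper: the paper's own proof is the one-line observation that $Q \mapsto [Q-Q_0]$ sends an $S$-integral point of $C\setminus\{Q_0\}$ to an $S$-integral point of $J\setminus 0$, contradicting Theorem~\ref{thm:main}. Your write-up simply makes explicit the bookkeeping the paper leaves implicit (good reduction of $J$ at $\fp$, spreading out the Abel--Jacobi map, injectivity on the special fibre, and reading the contradiction off the containment $J(L)\subseteq J^1(L_\fP)$ at the single prime $\fP$), all of which is fine.
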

\begin{proof}
	If $Q \in (C\setminus \{Q_0\})(\OO_{L,S})$
	then the linear equivalence class $[Q-Q_0]$ yields 
	an element of $(J-0)(\OO_{L,S})$,
contradicting Theorem~\ref{thm:main}.
\end{proof}

\begin{example}
Let $E/\Q$ be an elliptic curve 
with complex multiplication
by an order in an imaginary quadratic field $K$.
Let $p$ be a prime of good supersingular reduction for $E$,
and write $K_n$ for the $n$-th layer of the
anticyclotomic $\Z_p$-extension of $K$.
It is known \cite[Theorem 1.8]{GreenbergParkCity} 
that $E(K_n)$ has unbounded rank as $n \rightarrow \infty$.
	Indeed $\rank(E_{K_n})-\rank(E_{K_{n-2}})=2p^{n-1} (p-1)$
	for sufficiently large $n$.

Suppose now that $p$ is unramified in $K$. 
As $E/\F_p$ is supersingular, we know that $p$ is inert in $K$.
Write $\fp=p\OO_K$ for the unique prime of $\OO_K$ above $p$.
Since $E/\F_p$ is supersingular, $a_\fp(E) \equiv 0 \pmod{p}$
and so $\#E(\F_\fp) \equiv 1 \pmod{p}$. In particular, $p \nmid \#E(\F_\fp)$.

Let $n \ge 1$. 
By \cite[Theorem 1]{Iwasawa73},
the extension $K_n/K$ is unramified away from $\fp$.
We show that $\fp$ is totally ramified in $K_n$.
Let $\fP$ be a prime ideal of $\OO_{K_n}$
above $\fp$, and let $I_\fP \subseteq \Gal(K_n/K)$ 
be the inertia group. As $K_n/K$ is cyclic,
$I_\fP$ is a normal subgroup.
In particular, $I_\fP=I_{\fP^\prime}$ for
any other prime ideal $\fP^\prime$
of $\OO_{K_n}$ above $\fp$.
It follows that the fixed field $K_n^{I_\fP}$
is an unramified cyclic extension of $K$. 
However, $K$ is the CM field of an elliptic curve
	defined over $\Q$ and so \cite[Theorem II.4.3]{SilvermanAdvanced} it 
has class number $1$. Therefore $K_n^{I_\fP}=K$,
implying $I_\fP=\Gal(K_n/K)$,
and so $\fp$ is totally ramified in $K$.

Finally we suppose that $E(K)=0$.
It now follows from Theorem~\ref{thm:main}
that $(E-0)(\OO_{K_n})=\emptyset$ for all $n \ge 1$,
despite the fact that the rank of $E(K_n)$
is unbounded as $n \rightarrow \infty$.

As a very concrete example of the above, let $E/\Q$ be the elliptic
curve with Cremona label \texttt{432a1} and Weierstrass model
\[
	E \; : \; Y^2=X^3-16.
\]
This has conductor $432=2^4 \times 3^3$, and 
has CM by the ring of integers of $K=\Q(\sqrt{-3})$.
	We checked using the computer algebra system \texttt{Magma}
	\cite{Magma}
	that $E(K)=0$. Let $p$ be an odd prime $\equiv 2 \pmod{3}$.
Then $p$ is a prime of good supersingular reduction for $E$,
and for every $n \ge 1$, we have $(E-0)(\OO_{K_n})=\emptyset$
where $K_n$ is the $n$-th layer of anticyclotomic $\Z_p$-extension of $K$.
\end{example}

Throughout the paper $\zeta_r$ denotes a primitive $r$-th root of $1$.
\begin{cor}\label{cor:cyclotomic}
Let $A/\Q$ be an abelian variety satisfying $A(\Q)=0$,
and write $\cN_A$ for the conductor of $A$. Let
\[
	R_A \; =\; \{  \text{$p \nmid \cN_A$ is prime}\; : \; \gcd(p(p-1),\#A(\F_p))=1\}.
\]
Then $(A-0)(\Z[\zeta_{p^n}])=\emptyset$ for all $p \in R_A$ and $n \ge 1$.
\end{cor}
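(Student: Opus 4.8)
The plan is to obtain Corollary~\ref{cor:cyclotomic} as a direct application of Theorem~\ref{thm:main}, taking $K=\Q$, $L=\Q(\zeta_{p^n})$, and $\fp=p\Z$. Fix a prime $p\in R_A$ and an integer $n\ge 1$, and put $m=[\Q(\zeta_{p^n}):\Q]=\varphi(p^n)=p^{n-1}(p-1)$. The work consists entirely in checking the hypotheses of Theorem~\ref{thm:main}: good reduction at $\fp$, total ramification in $L$, and coprimality of $\#A(\F_\fp)$ with $m$; the remaining hypothesis $A(\Q)=0$ is part of the statement of the corollary.

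First I would record that $A$ has good reduction at $p$. Since $p\nmid\cN_A$, and the conductor of an abelian variety over $\Q$ is supported precisely at the primes of bad reduction (equivalently, by the criterion of N\'eron--Ogg--Shafarevich), $A$ has good reduction at $p$; here the residue field is $\F_\fp=\Z/p\Z=\F_p$, so $\#A(\F_\fp)=\#A(\F_p)$. Next I would invoke the classical theory of cyclotomic fields: $p$ is totally ramified in $\Q(\zeta_{p^n})/\Q$ --- it is the unique prime above itself, with ramification index $\varphi(p^n)=m$ --- and moreover the ring of integers of $\Q(\zeta_{p^n})$ is $\Z[\zeta_{p^n}]$. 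This verifies hypothesis~(i) and identifies $\OO_L$. Finally, since $p\in R_A$ we have $\gcd(p(p-1),\#A(\F_p))=1$; as every prime dividing $m=p^{n-1}(p-1)$ divides $p(p-1)$, it follows that $\gcd(m,\#A(\F_p))=1$, which is hypothesis~(ii).

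With all hypotheses in place, Theorem~\ref{thm:main} applied with $S=\emptyset$ --- which trivially does not contain the unique prime $\fP$ of $\OO_L$ above $p$ --- yields $(A\setminus 0)(\OO_L)=\emptyset$. Since $\OO_L=\Z[\zeta_{p^n}]$, this is exactly the assertion $(A-0)(\Z[\zeta_{p^n}])=\emptyset$, and $p\in R_A$ and $n\ge 1$ were arbitrary.

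I do not expect any genuine obstacle: the entire arithmetic content is already in Theorem~\ref{thm:main}, and the only external ingredients are the standard facts that $p$ is totally ramified in $\Q(\zeta_{p^n})$ and that $\Z[\zeta_{p^n}]$ is its full ring of integers. The one point worth stating carefully is that we invoke the ``in particular'' clause of Theorem~\ref{thm:main} with the empty set of places, so that $\OO_{L,S}$ is literally the full ring of integers $\Z[\zeta_{p^n}]$.
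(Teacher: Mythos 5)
Your proof is correct and follows exactly the route the paper takes: a direct application of Theorem~\ref{thm:main} with $K=\Q$, $L=\Q(\zeta_{p^n})$, $\fp=p\Z$, using total ramification of $p$ in $\Q(\zeta_{p^n})$, good reduction from $p\nmid\cN_A$, and the observation that every prime divisor of $m=p^{n-1}(p-1)$ divides $p(p-1)$. You merely spell out the standard cyclotomic facts (and the choice $S=\emptyset$, $\OO_L=\Z[\zeta_{p^n}]$) that the paper leaves implicit.
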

\begin{proof}
Let $p \in R_A$ and write $L=\Z[\zeta_{p^n}]$. 
Then $p$ is totally ramified in $L$, and as $p \nmid \cN_A$,
it is a prime of good reduction for $A$.
Moreover, $[L:\Q]=p^{n-1}(p-1)$ is coprime
to $\#A(\F_p)$. The conclusion follows from Theorem~\ref{thm:main}.
\end{proof}
The set $R_A$ can be finite or empty. For example if $A$ has a rational
point of order $2$ then
$2 \mid \#A(\F_p)$
for all odd primes of good reduction, and so 
$R_A \subseteq \{2\}$ in this case. In a forthcoming paper
we provide heuristic and experimental evidence that $R_A$ has positive density
under some conditions on $A$. For now we 
content ourselves with two examples.

\begin{example}\label{ex:elliptic}
Let $E/\Q$ be the elliptic curve with \texttt{LMFDB} \cite{LMFDB}
	label \texttt{67.a1} and Cremona label \texttt{67a1}.
	This has Weierstrass model
	\begin{equation}\label{eqn:E}
	E \; : \; 
	Y^2+Y=X^3+X^2-12X-21,
	\end{equation}
	conductor $67$ and Mordell--Weil group $E(\Q)=0$.
	By Corollary~\ref{cor:cyclotomic}, the affine Weiestrass model \eqref{eqn:E}
	does not have any $\Z[\zeta_{p^n}]$-points for the values of $p \in R_E$.
	For a positive integer $N$ we shall write
	$[1,N]$ for the interval consisting
	of integers up to $N$.
	A short \texttt{Magma} computation
	shows that 
	\[
	\begin{split}
		R_E \cap [1,1000] &= \{
	2,\; 17,\; 19,\; 23,\; 47,\; 59,\; 89,\; 107,\; 
		127,\; 149,\; 151,\; 157,\; 163,\\ 
		& \quad 173,\; 
		193,\; 199,\; 227,\; 
		257,\; 283,\; 359,\; 421,\; 431,\;
		449,\; 479,\; \\
		& \quad 491,\; 509,\; 569, \;
		 601,\;
		613,\; 617,\; 659,\; 691,\;
		719,\; 773,\; 821,\\ 
		& \quad 823,\; 
		827,\; 839,\;
		 881,\; 887,\; 911,\; 947,\; 953,\; 971,\; 977 
		\}.
	\end{split}
	\]
	Table~\ref{table1}	gives some statistics.	

\begin{table}[!htbp]
\begin{centering}
{\tabulinesep=1.2mm
\begin{tabu}{|c|c|c|c|}
\hline\hline
\rule[-2.0ex]{0pt}{5ex}
$k$ & $\# R_E \cap [1,10^k]$ & $\pi(10^k)$ & $(\# R_E \cap [1,10^k])/\pi(10^k)$ (4 d.p.)\\
\hline\hline
$2$ & $7$ &  $25$ & $0.2800$\\
$3$ & $45$ & $168$ & $0.2679$\\
$4$ & $297$ & $1229$ & $0.2417$\\
$5$ & $2309$ & $9592$ & $0.2407$\\
$6$ & $19060$ & $78498$ & $0.2428$\\
$7$ & $160958$ & $664579$ & $0.2422$\\
$8$ & $1395958$ & $5761455$ & $0.2423$\\
\hline
\end{tabu}
}
\end{centering}
\caption{
We write $\pi(N)$ for the number of primes $\le N$.
This table gives statistics for $R_E \cap [1,10^k]$
for $2 \le k \le 8$, where $E$ is the elliptic
curve \texttt{67a1}.
}
\label{table1}
\end{table}

\end{example}
\begin{example}\label{ex:gen2}
Let $C/\Q$ be the genus $2$ curve with \texttt{LMFDB}
label \texttt{8969.a.8969.1}
having affine Weierstrass model
\begin{equation}\label{eqn:Weierstrass}
	C \; : \; y^2 + (x + 1) y = x^5 - 55 x^4 - 87 x^3 - 54 x^2
    - 16 x - 2.
\end{equation}
We take $A=J$ to be the Jacobian of $C$.
According to the \texttt{LMFDB}, 
$J$ is absolutely simple, and $J(\Q)=\{0\}$.
The conductor $\cN_J=8969$ which is prime.
We note that $C$ has a rational point
at $\infty$, and thus $C(\Q)=\{\infty\}$.
By Corollary~\ref{cor:cyclotomic}, 
$(J-0)(\Z[\zeta_{p^n}])=\emptyset$
for all $p \in R_J$,
and so the affine Weierstrass model in \eqref{eqn:Weierstrass}
has no $\Z[\zeta_{p^n}]$-points for all $n \ge 1$.
A short \texttt{Magma} computation gives 
	\[
	\begin{split}
		R_J \cap [1,1000] & =\{
11,\; 13,\; 43,\; 79,\; 149,\; 163,\; 223,\; 227,\; 269,\; 353,\;
	367,\; 443,\\ 
		& \quad 523,\; 
		 593,\; 641,\; 683,\; 
743,\; 769,\; 797,\; 887,\; 929,\; 941,\; 991
\}.
	\end{split}
	\]
Note $\# R_J \cap [1,1000]=23$, $\pi(1000)=168$, and so $(\# R_J \cap [1,1000])/\pi(1000) \approx 0.137$.
\end{example}

\bigskip

Our next theorem concerns abelian varieties
$A$ defined over $\Q$ with trivial Mordell--Weil
group; i.e. $A(\Q)=0$. Let $\ell$ be a rational
prime, and let $S$ be a finite set of rational
primes. The theorem states that, under an additional
hypothesis, $(A-0)(\OO_{L,S})=\emptyset$
for 100\% of degree $\ell$ cyclic extensions $L/\Q$,
ordered by conductor. 
Here $\OO_{L,S}$ denotes $\OO_{L,T}$
where $T$ is set of places of $L$
above the rational primes belonging to $S$.
We denote by $\zeta_\ell$ a fixed primitive
$\ell$-th root of $1$, and by $A[\ell]$
the $\ell$-torsion subgroup of $A(\overline{\Q})$.
We observe that $\Q(\zeta_\ell) \subseteq \Q(A[\ell])$
(for a proof see Lemma~\ref{lem:WeilPairing} below).
We shall write
\begin{equation}\label{eqn:GH}
	G_\ell(A)=\Gal(\Q(A[\ell])/\Q),
	\qquad
	H_\ell(A)=\Gal(\Q(A[\ell])/\Q(\zeta_\ell)).
\end{equation}
We note that $H_\ell(A)$ is a normal subgroup of $G_\ell(A)$.
We also write
\begin{equation}\label{eqn:cC}
	\cC_\ell(A)=\{\sigma \in H_\ell(A) \; : \;
	\text{$\sigma$ acts freely on $A[\ell]$} \}.
\end{equation}

\begin{thm}\label{thm:cyclic}
Let $\ell$ be a rational prime.
Let
 $A$ be an abelian variety 
defined over $\Q$.
Suppose that
\begin{enumerate}[(i)]
\item $A(\Q)=0$;
\item $\cC_\ell(A) \ne \emptyset$.
\end{enumerate}
For $X>0$, let $\cF^{\mathrm{cyc}}_\ell(X)$ be set of cyclic number fields $L$
of degree $\ell$ and conductor at most $X$. 
Let $S$ be a finite set of rational primes.
Then 
\[
	\frac{\# \{L \in \cF^{\mathrm{cyc}}_\ell(X) \; : \; 
	(A \setminus 0)(\OO_{L,S}) \ne \emptyset\}}{\#\cF^{\mathrm{cyc}}_\ell(X) } \; = \; O\left(\frac{1}{(\log{X})^{\gamma}} \right)
\]
as $X \rightarrow \infty$, where
	\[
		\gamma \; = \; \frac{\# \cC_\ell(A)}{\# H_\ell(A)}.
	\]
\end{thm}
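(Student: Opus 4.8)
The plan is to derive the estimate from Theorem~\ref{thm:main} by a sieve-type count of cyclic degree-$\ell$ fields ordered by conductor. The first step is local. Let $L \in \cF^{\mathrm{cyc}}_\ell(X)$ have conductor $\ff_L$, and suppose $p \mid \ff_L$ with $p \neq \ell$. The inertia subgroup at a prime of $\OO_L$ above $p$ is a nontrivial subgroup of the cyclic group $\Gal(L/\Q)$ of order $\ell$, hence is all of it, so $p$ is totally ramified in $L$; moreover $L_\fP/\Q_p$ is then a Galois, totally and tamely ramified extension of degree $\ell$, which forces $\mu_\ell \subseteq \Q_p$, i.e.\ $p \equiv 1 \pmod{\ell}$. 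Taking $K = \Q$ and $m = \ell$ in Theorem~\ref{thm:main}, I conclude that $(A \setminus 0)(\OO_{L,S}) = \emptyset$ as soon as $\ff_L$ has a prime divisor $p$ with $p \notin S$, $p \nmid \cN_A$ and $\ell \nmid \# A(\F_p)$. Consequently the set of $L$ contributing to the numerator is contained in the set of those $L$ whose conductor has no such prime divisor, and it suffices to bound the size of the latter set.

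Next I would invoke the Chebotarev density theorem. Let $\Sigma$ be the finite set of primes consisting of $\ell$, the primes dividing $\cN_A$, and the primes in $S$. For $p \notin \Sigma$ the prime $p$ is unramified in $\Q(A[\ell])$, so $\Frob_p$ determines a conjugacy class in $G_\ell(A)$, lying in $H_\ell(A)$ precisely when $p \equiv 1 \pmod{\ell}$. For such $p$ the reduction map gives a $\Gal(\Qbar/\Q)$-equivariant isomorphism $A[\ell] \isomto A(\Fbar_p)[\ell]$; since an element of $\cC_\ell(A)$ has no nonzero fixed vector on $A[\ell]$, it follows that $\Frob_p \in \cC_\ell(A)$ forces $A(\F_p)[\ell] = 0$, hence $\ell \nmid \# A(\F_p)$. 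Because Galois acts on $A[\ell]$ by automorphisms, $\cC_\ell(A) \subseteq H_\ell(A)$ is stable under $G_\ell(A)$-conjugation, so the condition ``$\Frob_p \in \cC_\ell(A)$'' depends only on $p$. Setting
\[
	\cB \; = \; \{\, p \equiv 1 \pmod{\ell} \; : \; p \notin \Sigma, \ \Frob_p \notin \cC_\ell(A) \,\},
\]
the Chebotarev density theorem, together with $\# G_\ell(A)/\# H_\ell(A) = [\Q(\zeta_\ell):\Q] = \ell - 1$, gives
\[
	\#\{\, p \leq t \; : \; p \in \cB \,\} \; \sim \; \frac{\# H_\ell(A) - \# \cC_\ell(A)}{\# G_\ell(A)} \cdot \frac{t}{\log t} \; = \; \frac{1 - \gamma}{\ell - 1} \cdot \frac{t}{\log t} .
\]
By the first paragraph (taking contrapositives), every $L$ that must be counted has $\ff_L$ supported on $\cB \cup \Sigma$.

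The last step is the count. Given a squarefree integer $d$ coprime to every prime in $\Sigma$, the number of cyclic degree-$\ell$ fields $L$ for which the primes dividing $\ff_L$ but not lying in $\Sigma$ are exactly the prime divisors of $d$ is $O\big((\ell-1)^{\omega(d)}\big)$: each such prime contributes a factor $\ell - 1$ (the number of order-$\ell$ characters of $(\Z/p)^\times$), while the ramification behaviour at $\Sigma$ contributes only a bounded factor, since the local conductor at each prime of $\Sigma$ is bounded. By the first two paragraphs every $L$ contributing to the numerator arises in this way from some squarefree $d \leq X$ all of whose prime factors lie in $\cB$, so
\[
	\#\{\, L \in \cF^{\mathrm{cyc}}_\ell(X) \; : \; (A \setminus 0)(\OO_{L,S}) \neq \emptyset \,\} \; \ll \; \sum_{\substack{d \leq X \text{ squarefree} \\ p \mid d\, \Rightarrow\, p \in \cB}} (\ell - 1)^{\omega(d)} .
\]
The summand is a nonnegative multiplicative function $f$ supported on squarefree integers, with $f(p) = \ell - 1$ for $p \in \cB$ and $f(p) = 0$ otherwise; by the displayed Chebotarev estimate, $\sum_{p \leq t} f(p) \sim (1 - \gamma)\, t/\log t$, so a standard mean-value theorem for nonnegative multiplicative functions (the Selberg--Delange method, or Wirsing's theorem) yields $\sum_{d \leq X} f(d) \ll X (\log X)^{(1 - \gamma) - 1} = X (\log X)^{-\gamma}$. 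The same mean-value theorem, now with $\cB$ replaced by the set of all primes $\equiv 1 \pmod{\ell}$ — of density $\frac{1}{\ell - 1}$, so with counting exponent $1$ — gives $\# \cF^{\mathrm{cyc}}_\ell(X) \gg X$; alternatively one may quote the known asymptotic $\# \cF^{\mathrm{cyc}}_\ell(X) \sim c_\ell X$. Dividing the two bounds produces the claimed $O\big( (\log X)^{-\gamma} \big)$, and the variant ordered by absolute discriminant follows via $|\mathrm{disc}(L)| = \ff_L^{\,\ell - 1}$.

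I expect the main obstacle to be the analytic step: establishing $\sum_{d \leq X} f(d) \ll X (\log X)^{-\gamma}$ with exactly the right power of $\log X$ (and, for a sharp statement, an asymptotic rather than an upper bound), together with the matching lower bound $\# \cF^{\mathrm{cyc}}_\ell(X) \gg X$. The remaining ingredients — the ramification analysis of the first paragraph, the Chebotarev computation, and the bookkeeping of the finitely many primes of $\Sigma$ — are routine. Note that hypothesis~(ii), $\cC_\ell(A) \neq \emptyset$, enters only to ensure $\gamma > 0$, which is precisely what makes the bound non-vacuous.
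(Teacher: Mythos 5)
Your proposal is correct, and its arithmetic core is the same as the paper's: you combine Theorem~\ref{thm:main} with the observation that a prime dividing the conductor of a cyclic degree-$\ell$ field is totally ramified and $\equiv 1 \pmod{\ell}$, and with the equivalence ``$\ell \nmid \#A(\F_p)$ iff $\Frob_p$ has no nonzero fixed vector on $A[\ell]$'' (the paper's Lemma~\ref{lem:equiv} and Lemma~\ref{lem:integral}), then a Chebotarev computation giving the density $(1-\gamma)/(\ell-1)$ of the bad primes (the paper's Lemma~\ref{lem:Frobenian}). Where you genuinely diverge is the analytic count. The paper proves a self-contained asymptotic (Proposition~\ref{prop:Ikehara}): it derives the exact formula for the number $N(n)$ of cyclic degree-$\ell$ fields of conductor $n$ via $\ell$-ranks and M\"obius inversion, forms the Dirichlet series $\sum (\ell-1)^{\omega(n)} n^{-s}$ over conductors supported on the restricted prime set, and applies Delange's Tauberian theorem to get $\#\cF^{\mathrm{cyc}}_{\cP,\ell}(X) \sim cX/(\log X)^{1-\beta}$, which simultaneously yields the denominator asymptotic $\#\cF^{\mathrm{cyc}}_\ell(X)\sim cX$. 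You instead bound the fiber of $L\mapsto d$ by $O((\ell-1)^{\omega(d)})$ via local characters and invoke a standard upper-bound mean-value theorem for nonnegative multiplicative functions to get $\ll X(\log X)^{-\gamma}$, quoting Urazbaev for the denominator. This is lighter, since the theorem only needs an upper bound for the numerator, and it avoids the conductor bookkeeping at the finitely many primes of $\Sigma$ (handled by your bounded-factor remark, which matches the paper's case analysis of the power of $\ell$ in the conductor); the paper's route buys a genuine asymptotic for the restricted count and makes the denominator estimate self-contained. One small caution: an upper-bound mean-value theorem (Shiu, Halberstam--Richert) does not by itself give $\#\cF^{\mathrm{cyc}}_\ell(X)\gg X$, so for the denominator you should rely on your stated alternative (Wirsing's asymptotic form, or simply the known theorem $\#\cF^{\mathrm{cyc}}_\ell(X)\sim c_\ell X$ that the paper attributes to Urazbaev), exactly as you propose.
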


\noindent \textbf{Remark.}
Let $L/\Q$ be cyclic of prime degree $\ell$.
Write $N$ for the conductor of $L$,
and $\Delta$ for its absolute discriminant.
It easily follows from the discriminant-conductor formula
\cite[Theorem 3.11]{Washington}
that $\Delta=N^{\ell-1}$. 
The conclusion of Theorem~\ref{thm:cyclic}
is therefore unchanged if 
instead we let 
$\cF^{\mathrm{cyc}}_\ell(X)$
be the set of
cyclic degree $\ell$ number fields
with absolute discrminant at most $X$.

\bigskip

Condition (ii) of Theorem~\ref{thm:cyclic},
in its present form, is computationally unfriendly. 
The following lemma simplifies the task of
checking condition (ii).

\begin{lem}\label{lem:equiv}
Let $p \ne \ell$ be a rational prime of good
reduction for $A$. Write $\sigma_p \in G_\ell(A)$
for a Frobenius element at $p$.
	\begin{enumerate}[(a)]
		\item $\sigma_p \in H_\ell(A)$
			if and only if 
			$p \equiv 1 \pmod{\ell}$.
		\item $\sigma_p \in \cC_\ell(A)$ if and only
	if $p \equiv 1 \pmod{\ell}$
	and $ \ell \nmid \# A(\F_p)$.
	\end{enumerate}
\end{lem}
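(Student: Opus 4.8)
The plan is to unwind the definitions of $H_\ell(A)$ and $\cC_\ell(A)$ in terms of the action of $\sigma_p$ on the $\F_\ell$-vector space $A[\ell]$, and then to translate each condition into a congruence on $p$ or a divisibility involving $\#A(\F_p)$.

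For part (a), I would start from the observation, already recorded in the excerpt, that $\Q(\zeta_\ell) \subseteq \Q(A[\ell])$, so that $H_\ell(A) = \Gal(\Q(A[\ell])/\Q(\zeta_\ell))$ is precisely the kernel of the restriction map $G_\ell(A) \to \Gal(\Q(\zeta_\ell)/\Q) = (\Z/\ell\Z)^\times$. By the standard description of the cyclotomic character, for a Frobenius element $\sigma_p$ at a prime $p \ne \ell$ (which is unramified in $\Q(\zeta_\ell)/\Q$) the image of $\sigma_p$ under this restriction is the class of $p$ in $(\Z/\ell\Z)^\times$; this can be seen either directly, or via the Weil pairing compatibility that forces $\det$ of $\sigma_p$ acting on $A[\ell]$ to equal the cyclotomic character (this is the content of Lemma~\ref{lem:WeilPairing} referenced in the excerpt). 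Hence $\sigma_p \in H_\ell(A)$ if and only if $p \equiv 1 \pmod{\ell}$. A minor point to address: $\sigma_p$ is only well-defined up to conjugacy in $G_\ell(A)$, but membership in the normal subgroup $H_\ell(A)$ is conjugation-invariant, so the statement is well-posed.

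For part (b), assume first $\sigma_p \in \cC_\ell(A)$, which by definition means $\sigma_p \in H_\ell(A)$ (giving $p \equiv 1 \pmod{\ell}$ by part (a)) and $\sigma_p$ acts freely on $A[\ell]$, i.e.\ no nonzero point of $A[\ell]$ is fixed by $\sigma_p$. Since $A$ has good reduction at $p$ and $p \ne \ell$, the reduction map identifies $A[\ell]$ with the $\ell$-torsion of $A/\F_p$ as a module over the decomposition group, with $\sigma_p$ acting as the Frobenius endomorphism; thus the fixed space $A[\ell]^{\sigma_p}$ is identified with $A(\F_p)[\ell]$. Therefore $\sigma_p$ acts freely on $A[\ell]$ precisely when $A(\F_p)[\ell] = 0$, which in turn is equivalent to $\ell \nmid \#A(\F_p)$ (the group $A(\F_p)$ being finite). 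Conversely, if $p \equiv 1 \pmod\ell$ and $\ell \nmid \#A(\F_p)$, then part (a) gives $\sigma_p \in H_\ell(A)$ and the same identification gives $A[\ell]^{\sigma_p} = A(\F_p)[\ell] = 0$, so $\sigma_p$ acts freely; hence $\sigma_p \in \cC_\ell(A)$. Again one notes that acting freely is a conjugation-invariant property of $\sigma_p$, so the ambiguity in the choice of Frobenius is harmless.

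The only genuinely substantive input, and the place I would be most careful, is the identification of the $\sigma_p$-fixed points of $A[\ell]$ with $A(\F_p)[\ell]$: this uses that $\ell$ is invertible at $p$ so reduction is injective on $\ell$-torsion, together with good reduction so that $\sigma_p$ on $A[\ell]$ is the geometric Frobenius on the reduction. Everything else is formal manipulation with the cyclotomic character and elementary finite-group theory, so no serious obstacle is anticipated beyond stating these compatibilities cleanly.
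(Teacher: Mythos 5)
Your argument is correct, and part (a) matches the paper's proof exactly: identify $H_\ell(A)$ with the kernel of restriction $G_\ell(A)\to\Gal(\Q(\zeta_\ell)/\Q)\cong(\Z/\ell\Z)^\times$ and use that Frobenius at $p$ maps to the class of $p$. For part (b), however, you take a genuinely different route to the key equivalence. The paper works with the characteristic polynomial: it cites the formula $\#A(\F_p)=P_p(1)$, where $P_p$ is the characteristic polynomial of Frobenius on $T_\ell(A)$, so that $\ell\mid\#A(\F_p)$ iff $1$ is a root of $P_p$ mod $\ell$, iff $1$ is an eigenvalue of $\sigma_p$ on $A[\ell]$, iff $\sigma_p$ has a nonzero fixed vector. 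You instead identify the fixed subspace directly: since $p\ne\ell$ and $A$ has good reduction at $p$, reduction gives a Frobenius-equivariant isomorphism $A[\ell]\cong\overline{A}[\ell]$ (injective because the kernel of reduction has no $\ell$-torsion, then bijective by counting), whence $A[\ell]^{\sigma_p}\cong A(\F_p)[\ell]$, and freeness is equivalent to $\ell\nmid\#A(\F_p)$. Both arguments rest on the same underlying fact that the Galois action on $A[\ell]$ is unramified at $p$ (needed even to state the lemma); beyond that, the paper's version buys brevity by invoking a single citation for $\#A(\F_p)=P_p(1)$ and staying entirely in linear algebra over $\F_\ell$, while yours is somewhat more self-contained and geometric, at the cost of having to justify the compatibility of reduction on $\ell$-torsion, which you correctly flag as the one substantive input. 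Either way the conclusion and the handling of the conjugacy ambiguity in $\sigma_p$ are sound.
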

\begin{proof}
Let $p \ne \ell$ be a prime of good reduction for $A$.
Recall that the isomorphism
$\Gal(\Q(\zeta_\ell)/\Q) \cong (\Z/\ell \Z)^\times$
sends the Frobenius element at a prime $q \ne \ell$ to 
the congruence class of $q$ modulo $\ell$.
However, $\Gal(\Q(\zeta_\ell)/\Q) \cong G_\ell(A)/H_\ell(A)$,
	thus $\sigma_p \in H_\ell(A)$
	if and only if $p \equiv 1 \pmod{\ell}$.
Moreover, we know 
\cite[Theorem 19.1]{Milne} that $\#A(\F_p)= P_p(1)$
where $P_p$ is the characteristic polynomial
of Frobenius at $p$ acting on the $\ell$-adic Tate module
	$T_\ell(A)$. Thus $\ell \mid \#A(\F_p)$
	if and only if $1$ is a root
	of $\overline{P_p}(X) \in \F_\ell[X]$.
	This is equivalent to $1 \in \F_\ell$
	being an eigenvalue for the 
	action of $\sigma_p$ on the $\F_{\ell}$-vector
	space $A[\ell]$, which is equivalent to
	$\sigma_p$ failing to act freely on $A[\ell]$.
\end{proof}
Lemma~\ref{lem:equiv} gives a computational method
for verifying condition (ii) of Theorem~\ref{thm:cyclic}
for a given prime $\ell$: 
all we need to do is produce a prime $p \equiv 1 \pmod{\ell}$
such that $\ell \nmid \# A(\F_p)$.
To check that 
condition (ii) holds for all primes $\ell$, or all but 
finitely many primes $\ell$, the following lemma
can be useful.
\begin{lem}\label{lem:PP}
Let $A/\Q$ be a principally polarized abelian
variety of dimension $d$. Let $\ell$ be a rational prime
and write 
\[
	\overline{\rho}_{A,\ell} \; : \; \Gal(\overline{\Q}/\Q) \rightarrow \GSp_{2d}(\F_\ell)
\]
for the mod $\ell$ representation of $A$. Suppose $\overline{\rho}_{A,\ell}$ 
	is surjective. Then $\cC_\ell(A) \ne \emptyset$.
\end{lem}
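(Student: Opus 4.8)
The plan is to reduce the assertion to a statement about $\GSp_{2d}(\F_\ell)$ and then settle that statement by an explicit construction. Since $\overline{\rho}_{A,\ell}$ is surjective it induces an isomorphism $G_\ell(A)=\Gal(\Q(A[\ell])/\Q)\isomto\GSp_{2d}(\F_\ell)$, and under this isomorphism the similitude (multiplier) character $\GSp_{2d}(\F_\ell)\to\F_\ell^\times$ corresponds to the mod $\ell$ cyclotomic character of $\Q$: this is exactly the Galois equivariance of the Weil pairing $A[\ell]\times A[\ell]\to\mu_\ell$ furnished by the principal polarization (the same input as Lemma~\ref{lem:WeilPairing}). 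Hence the normal subgroup $H_\ell(A)=\Gal(\Q(A[\ell])/\Q(\zeta_\ell))$, being the kernel of the composite $G_\ell(A)\to\Gal(\Q(\zeta_\ell)/\Q)\cong\F_\ell^\times$, corresponds precisely to $\Sp_{2d}(\F_\ell)=\ker(\text{multiplier})$. Moreover, under the same identification, the condition in \eqref{eqn:cC} that $\sigma$ act freely on $A[\ell]$ translates (compare the proof of Lemma~\ref{lem:equiv}) into the condition that the corresponding matrix not have $1$ as an eigenvalue. So it suffices to exhibit one $g\in\Sp_{2d}(\F_\ell)$ with $g-I$ invertible; the element $\sigma\in H_\ell(A)$ with $\overline{\rho}_{A,\ell}(\sigma)=g$ then lies in $\cC_\ell(A)$.

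To produce such a $g$, I would fix a symplectic basis of $A[\ell]\cong\F_\ell^{2d}$, giving a decomposition $A[\ell]=H_1\oplus\cdots\oplus H_d$ into mutually orthogonal hyperbolic planes, on each of which the symplectic group is $\Sp_2(\F_\ell)=\SL_2(\F_\ell)$. On every $H_i$ put the companion matrix $g_0=\begin{psmallmatrix}0 & -1\\ 1 & 1\end{psmallmatrix}$ of the polynomial $X^2-X+1$: it has determinant $1$, so $g_0\in\SL_2(\F_\ell)$, and its characteristic polynomial does not vanish at $X=1$ because $1-1+1=1\ne 0$ in $\F_\ell$ for every prime $\ell$. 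The block matrix $g=g_0\oplus\cdots\oplus g_0$ preserves the symplectic form (each block does, and the blocks are mutually orthogonal), so $g\in\Sp_{2d}(\F_\ell)$; and $g$ has no nonzero fixed vector, since no $H_i$ contains one. This $g$ is the required element, and the proof is complete. (When $\ell$ is odd one may instead simply take $g=-I$, which is central in $\GSp_{2d}(\F_\ell)$, has multiplier $1$, and has only the eigenvalue $-1\ne 1$; the companion-matrix choice is recorded because it works uniformly in $\ell$, including $\ell=2$, where $-I=I$.)

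I do not anticipate a genuine obstacle: the substance is just careful bookkeeping. The one point that deserves a clean statement before the construction is the identification of $H_\ell(A)$ with $\Sp_{2d}(\F_\ell)$ — equivalently, that the arithmetic multiplier character is the cyclotomic character — which is where the principal polarization enters, via the standard compatibility of the Weil pairing with the $\Gal(\overline{\Q}/\Q)$-action. After that everything is elementary linear algebra over $\F_\ell$, the only mild care being to keep the construction insensitive to the prime $\ell$ (in particular to $\ell=2$), which the choice of $X^2-X+1$ arranges.
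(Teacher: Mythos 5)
Your argument is correct and follows the same route as the paper: identify $H_\ell(A)$ with $\Sp_{2d}(\F_\ell)$ via surjectivity and the Weil-pairing/multiplier compatibility, translate ``acts freely'' into ``no eigenvalue $1$'', and exhibit an explicit fixed-point-free symplectic matrix. The only cosmetic difference is your uniform choice of blocks $\begin{psmallmatrix}0 & -1\\ 1 & 1\end{psmallmatrix}$ (companion matrix of $X^2-X+1$), which works for every $\ell$ at once, whereas the paper takes $-I_{2d}$ for $\ell\ne 2$ and a separate block matrix for $\ell=2$.
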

\begin{proof}
Suppose $\overline{\rho}_{A,\ell}$ is surjective. The map $\overline{\rho}_{A,\ell}$
	factors through $G_\ell(A)$. The image 
	of $H_\ell(A) \subseteq G_\ell(A)$
	is $\Sp_{2d}(\F_\ell)$. An element $\sigma \in H_\ell(A)$
acts freely on $A[\ell]$ if and only if its image in $\Sp_{2d}(\F_\ell)$
is a matrix with none of the eigenvalues equal to $1 \in \F_\ell$. 
	All that remains
is to specify such a matrix $M \in \Sp_{2d}(\F_\ell)$. If $\ell \ne 2$ we may take $M=-I_{2d}$
where $I_{2d}$ is the $2d \times 2d$ identity matrix.
If $\ell = 2$ then we may take 
\[
M=
\begin{pmatrix}
1 & 1 & 0 & 0 & \cdots & 0 & 0\\
1 & 0 & 0 & 0 & \cdots & 0 & 0\\
0 & 0 & 1 & 1 & \cdots & 0 & 0 \\
0 & 0 & 1 & 0 & \cdots & 0 & 0\\
\vdots & \vdots & \vdots & \vdots & \ddots & \vdots & \vdots\\
0 & 0 & 0 & 0 & \cdots & 1 & 1 \\
0 & 0 & 0 & 0 & \cdots & 1 & 0\\
\end{pmatrix}.
\]
\end{proof}

It follows, thanks to the following theorem of Serre \cite[Theorem 3]{serreIV},
that condition (ii) of Theorem~\ref{thm:cyclic} is satisfied for all sufficiently large
 $\ell$ subject to some further assumptions on $A$.
\begin{thm}[Serre]
Let $A$ be a principally polarized abelian variety of dimension $d$,
defined over $\Q$.
Assume that $d = 2$, $6$ or $d$ is odd and furthermore assume that
$\End_{\overline{\Q}}(A)=\Z$. Then there exists a bound $B_{A}$ such that
for all primes $\ell > B_{A}$ the representation $\overline{\rho}_{A,\ell}$
is surjective.
\end{thm}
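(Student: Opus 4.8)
The plan is to reduce to the case that $A$ has good reduction everywhere is false — instead we work over $\Q$ directly and apply the theory of Galois representations attached to abelian varieties with $\End_{\Qbar}(A)=\Z$. The key input is the classification of maximal subgroups of $\GSp_{2d}(\F_\ell)$ together with Faltings' theorem that the $\ell$-adic representation $\rho_{A,\ell}\colon \Gal(\Qbar/\Q)\to\GSp_{2d}(\Z_\ell)$ has open image; by Serre's methods the reductions $\overline{\rho}_{A,\ell}$ have image equal to all of $\GSp_{2d}(\F_\ell)$ for all but finitely many $\ell$, once one rules out each family of proper subgroups. So I would structure the argument as: (1) record that $\im(\overline{\rho}_{A,\ell})$ contains $\Sp_{2d}(\F_\ell)$ as soon as it is not contained in a proper subgroup and the similitude character is surjective; (2) list the possibilities for a proper subgroup $H$ of $\GSp_{2d}(\F_\ell)$ surjecting onto $\F_\ell^\times$ via the multiplier — reducible (stabilizer of an isotropic subspace), imprimitive (preserving a decomposition), contained in a smaller classical group over a subfield, normalizer of a torus, or "exceptional/small" image; (3) eliminate each type for $\ell$ large using the hypothesis $\End_{\Qbar}(A)=\Z$ (which kills the reducible and tensor-decomposable cases via the endomorphism algebra), the Weil bounds on Frobenius eigenvalues at a fixed auxiliary prime of good reduction (which kills imprimitive and torus-normalizer cases by forcing the characteristic polynomial mod $\ell$ to not factor in the required way), and a counting/ramification argument à la Serre (conductor bounded independently of $\ell$) to kill the exceptional cases.

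The dimension restrictions $d=2$, $d=6$, or $d$ odd enter precisely at the step where one must show the image is not contained in $\mathrm{N}(\mathrm{GL}_d)$ or a twisted form — these are the dimensions for which the relevant obstruction (related to the Hodge group and the absence of extra automorphisms of the Dynkin diagram of $\mathrm{Sp}_{2d}$, or to $d$ being such that $\mathrm{Sp}_{2d}(\F_\ell)$ has no suitable exotic subgroup) can be handled; for other even $d$ there are genuine subgroups (coming from $\mathrm{SO}$ or from products) that cannot be excluded by these elementary means, which is why Serre imposes the hypothesis. I would cite \cite[Theorem 3]{serreIV} for the statement and not reprove it, since the excerpt explicitly labels this "the following theorem of Serre" — so the "proof" here is really just the observation that the theorem, once quoted, feeds directly into Lemma~\ref{lem:PP} to give condition (ii) of Theorem~\ref{thm:cyclic} for all $\ell>B_A$.

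Concretely, the deduction is immediate: \emph{Suppose $A$ is principally polarized of dimension $d\in\{2,6\}$ or $d$ odd, with $\End_{\Qbar}(A)=\Z$. By Serre's theorem there is $B_A$ such that $\overline{\rho}_{A,\ell}$ is surjective for all primes $\ell>B_A$. For each such $\ell$, Lemma~\ref{lem:PP} gives $\cC_\ell(A)\ne\emptyset$, which is exactly condition (ii) of Theorem~\ref{thm:cyclic}. Hence, assuming also $A(\Q)=0$, Theorem~\ref{thm:cyclic} applies for every $\ell>B_A$, and $(A\setminus 0)(\OO_{L,S})=\emptyset$ for $100\%$ of cyclic degree-$\ell$ fields $L/\Q$ ordered by conductor (equivalently by absolute discriminant), for all $\ell>B_A$.} The only genuine mathematical content beyond citation is checking that the hypotheses of Lemma~\ref{lem:PP} — principal polarization and surjectivity of $\overline{\rho}_{A,\ell}$ — match the hypotheses of Serre's theorem verbatim, which they do.

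The main obstacle, were one to prove Serre's theorem from scratch rather than cite it, is unquestionably step (3) for the torus-normalizer and exceptional cases: ruling these out requires either a clever choice of auxiliary Frobenius whose characteristic polynomial has a root generating a large field (controlled via effective Chebotarev or explicit point-counting), or Serre's ramification-theoretic argument bounding the conductor of $\overline{\rho}_{A,\ell}$ uniformly in $\ell$ and then invoking that a group with image in an exceptional subgroup would force too much ramification — and it is exactly here that the arithmetic of $d$ (the dimension restriction) becomes unavoidable. But for the purposes of this paper that difficulty is outsourced, and the statement as worded is simply the packaging of Serre's result in the form needed downstream.
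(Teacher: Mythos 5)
Your proposal does exactly what the paper does: the statement is Serre's theorem, quoted from \cite[Theorem 3]{serreIV} without proof, and your ultimate move is likewise to cite it and note that it feeds into Lemma~\ref{lem:PP} to give condition (ii) of Theorem~\ref{thm:cyclic} for all $\ell > B_A$. The surrounding sketch of how Serre's proof would go is not required (and some of its details about the role of the dimension restriction are heuristic at best), but since the citation carries the statement, your treatment matches the paper's.
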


\begin{example}
We return to the elliptic curve $E$ in Example~\eqref{ex:elliptic}.
	We noted previously that $E(\Q)=0$.
	According to the \texttt{LMFDB},
	$\overline{\rho}_{E,\ell}$ is surjective
	for all primes $\ell$. It follows from
	Lemma~\ref{lem:PP} and Theorem~\ref{thm:cyclic}
	that for any prime $\ell$, 
	and any fixed set of rational primes $S$,
	the Weierstrass model \eqref{eqn:E}
	does not have $\OO_{L,S}$-integral points,
	for 100\% of cyclic degree $\ell$ number
	fields $L$.
\end{example}
\begin{example}
We return to the genus $2$ curve $C$ in Example~\ref{ex:gen2}
and to its Jacobian $J$.
	We observed previously that $J(\Q)=0$.
In particular, $J$ satisfies hypothesis (i) of Theorem~\ref{thm:cyclic}.
Moreover, $J$ is semistable as its conductor $\cN_J=8969$ is prime.
Using the method in \cite{ALS}, \cite{Dieulefait} (which is
particularly suited to semistable Jacobians),
we checked that
$\overline{\rho}_{J,\ell}$ is surjective
for $\ell \ge 5$, $\ell \ne 8969$.
	Thus, by Lemma~\ref{lem:PP},
	the Jacobian $J$ satisfies hypothesis (ii) of Theorem~\ref{thm:cyclic}
for those primes.
For $\ell=2$, $3$, $8969$ we choose 
$p=5$, $7$, $17939$ respectively (all three satisfying $p \equiv 1 \pmod{\ell}$), and find
\[
	\#J(\F_5)=15, \qquad \#J(\F_7)=32, 
	\qquad \# J(\F_{17939})
	=
	317816600
	=2^3 \times 5^2 \times 1589083,
\]
	so, by Lemma~\ref{lem:equiv},
	hypothesis (ii) of the theorem is satisfied
for $\ell=2$, $3$ and $8969$.
	It follows from Theorem~\ref{thm:cyclic}
	that for all primes $\ell$, 
and any finite set of primes $S$,
we have $(J-0)(\OO_{L,S})=\emptyset$
for $100\%$ of cyclic degree $\ell$ number fields $L$.
We conclude that $(C-\infty)(\OO_{L,S})=\emptyset$
for $100\%$ of cyclic degree $\ell$ number fields $L$.
\end{example}

\bigskip

The paper is organized as follows. In
Section~\ref{sec:traces}, we study traces
on abelian varieties over totally ramified
local extensions. In Section~\ref{sec:proofthmmain}
we prove Theorem~\ref{thm:main}.
Section~\ref{sec:counting} is devoted
to counting cyclic fields of prime degree $\ell$
such that the conductor is divisible only
by primes belonging a certain \lq regular\rq\
set. Section~\ref{sec:proofthmcyclic}
gives a proof of Theorem~\ref{thm:cyclic}.

\bigskip

We would like to thank Ariyan Javanpeykar
for useful discussions, and for bringing the 
Arithmetic Puncturing Problem to 
our attention.

\section{Traces over totally ramified local extensions}\label{sec:traces}
In this section, we let $p$ be a rational prime, and $K$
a finite extension of $\Q_p$, and
$L/K$
a totally ramified extension of 
finite degree $m$. 
Let $\pi$ and $\Pi$ be uniformizing elements for $K$
and $L$ respectively. 
Let $M/K$ 
be the Galois closure of $L/K$. 
Let $\lvert \, \cdot\, \rvert$ denote
the absolute value on these fields
normalised so that $\lvert p \rvert=p^{-1}$.
Write $\sigma_1,\dotsc,\sigma_m$ for
the distinct embeddings $L \hookrightarrow M$
satisfying $\sigma_i(a)=a$ for $a \in K$,
where $\sigma_1$ is the trivial embedding
$\sigma_1(\alpha)=\alpha$ for $\alpha \in L$.
\begin{lem}\label{lem:dodgy1}
	Let $\alpha \in \OO_L$. Then $\lvert \sigma_i(\alpha)-\alpha \rvert<1$
for $i=1,\dots,m$.
\end{lem}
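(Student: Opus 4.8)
The plan is to exploit the fact that, since $L/K$ is totally ramified, the residue field extension is trivial, so that every element of $\OO_L$ differs from an element of $\OO_K$ by something in the maximal ideal. Write $\mm_L$ and $\mm_M$ for the maximal ideals of $\OO_L$ and $\OO_M$. Because $L/K$ is totally ramified, the composite $\OO_K \hookrightarrow \OO_L \twoheadrightarrow \OO_L/\mm_L$ is surjective, i.e.\ $\OO_L = \OO_K + \mm_L$. Hence, given $\alpha \in \OO_L$, I would write $\alpha = a + \beta$ with $a \in \OO_K$ and $\beta \in \mm_L$.

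Since each $\sigma_i$ fixes $K$ pointwise, $\sigma_i(\alpha) - \alpha = \sigma_i(\beta) - \beta$, so it suffices to show $\lvert \sigma_i(\beta) \rvert < 1$ and $\lvert \beta \rvert < 1$; the conclusion then follows from the ultrametric inequality $\lvert \sigma_i(\beta) - \beta \rvert \le \max(\lvert \sigma_i(\beta)\rvert, \lvert \beta \rvert) < 1$. The bound $\lvert \beta \rvert < 1$ is immediate from $\beta \in \mm_L$. For $\lvert \sigma_i(\beta) \rvert < 1$, I would observe that, since $M/K$ is the Galois closure of $L/K$, the embedding $\sigma_i : L \hookrightarrow M$ extends to an element $\tilde\sigma_i \in \Gal(M/K)$; as the absolute value on $M$ is the unique one extending that on $K$, it is $\Gal(M/K)$-invariant, whence $\lvert \sigma_i(\beta) \rvert = \lvert \tilde\sigma_i(\beta) \rvert = \lvert \beta \rvert < 1$.

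The only point requiring a little care is the justification that $\sigma_i$ is an isometry for the normalised absolute value --- that it extends to an automorphism of $M$ and that this absolute value is Galois-stable --- but both facts are standard consequences of $M/K$ being a finite normal extension of local fields together with the uniqueness of the extension of $\lvert\,\cdot\,\rvert$ from $K$ to $M$. An alternative, avoiding the Galois closure altogether, is to use that total ramification gives $\OO_L = \OO_K[\Pi]$ with $\Pi$ a root of an Eisenstein polynomial $g \in \OO_K[X]$: then $\sigma_i(\Pi)$ is another root of $g$, hence integral with $\lvert \sigma_i(\Pi) \rvert < 1$, and expanding $\alpha = \sum_{j=0}^{m-1} a_j \Pi^j$ with $a_j \in \OO_K$ gives $\sigma_i(\alpha) - \alpha = \sum_{j=1}^{m-1} a_j\bigl(\sigma_i(\Pi)^j - \Pi^j\bigr)$, each summand of absolute value $< 1$, so again $\lvert \sigma_i(\alpha) - \alpha \rvert < 1$.
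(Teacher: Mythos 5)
Your argument is correct and is essentially the paper's own proof: both use total ramification to write $\alpha=a+\beta$ with $a\in\OO_K$ and $\lvert\beta\rvert<1$, then use the Galois-invariance of the (unique) extended absolute value on $M$ to get $\lvert\sigma_i(\alpha)-a\rvert=\lvert\alpha-a\rvert<1$, and conclude by the ultrametric inequality. (Your alternative via an Eisenstein polynomial is also fine, but the main route matches the paper.)
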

\begin{proof}
As $L/K$ is totally ramified
we have $\OO_L/\Pi=\OO_K/\pi$. Hence there is some
$a \in \OO_K$ such that $\alpha \equiv a \pmod{\Pi}$.
It follows that $\lvert \alpha-a \rvert<1$.
Now, as each $\sigma_i$ is the restriction to 
$L$ of an automorphism of $M/K$,
the differences $\alpha-a$ and $\sigma_i(\alpha)-a$
are conjugate over $K$. Therefore \cite[page 119]{Cassels},
$\lvert \sigma_i(\alpha)-a \rvert=\lvert \alpha-a \rvert<1$.
By the ultrametric property of non-archimedean absolute
values, $\lvert \sigma_i(\alpha) - \alpha \rvert<1$.
\end{proof}

\begin{lem}\label{lem:dodgy2}
Let $A/K$ be an abelian
variety having good reduction.
	Let $Q \in A(L)$. Then
	\begin{equation}\label{eqn:trace}
		\Trace_{L/K} Q \; \equiv \; m Q \pmod{\Pi}.
	\end{equation}
\end{lem}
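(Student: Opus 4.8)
The plan is to reduce both sides modulo the maximal ideal of $\OO_M$, using the good reduction model of $A$. Since $A/K$ has good reduction it extends to an abelian scheme $\cA/\OO_K$, and for every finite extension $F/K$ the valuative criterion of properness (equivalently the N\'eron mapping property) gives $A(F)=\cA(\OO_F)$; reduction modulo the maximal ideal then furnishes a group homomorphism $\cA(\OO_F)\to\widetilde{A}(\F_F)$, where $\widetilde{A}=\cA\times_{\OO_K}\F_K$ and $\F_F$ is the residue field of $F$, and this reduction map is functorial in $F$. Recall also that $\Trace_{L/K}Q=\sum_{i=1}^m\sigma_i(Q)$, a sum formed in $A(M)$; it is fixed by $\Gal(M/K)$ and therefore lies in $A(K)\subseteq A(L)$.

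First I would extract what Lemma~\ref{lem:dodgy1} says on residue fields. Applying it to an arbitrary $\alpha\in\OO_L$ and reducing modulo $\mm_M$ shows that the ring homomorphism $\OO_L\xrightarrow{\sigma_i}\OO_M\to\F_M$ does not depend on $i$: it is the canonical map $\OO_L\to\F_L\hookrightarrow\F_M$ induced by $\sigma_1$. Because $L/K$ is totally ramified we have $\F_L=\F_K$, so the reduction $\widetilde{Q}$ of $Q$ already lies in $\widetilde{A}(\F_K)$. Now regard $Q$ as a section $\Spec\OO_L\to\cA$; precomposing with the morphism $\Spec\OO_M\to\Spec\OO_L$ induced by $\sigma_i$ gives the section corresponding to $\sigma_i(Q)\in\cA(\OO_M)=A(M)$, and by the previous observation its reduction in $\widetilde{A}(\F_M)$ is the image of $\widetilde{Q}$ under the base-change map $\widetilde{A}(\F_K)\to\widetilde{A}(\F_M)$, in particular the same element for every $i$. (Alternatively one could extend each $\sigma_i$ to an element of $\Gal(M/K)$ and invoke Galois-equivariance of reduction together with $\widetilde Q\in\widetilde A(\F_K)$, but using Lemma~\ref{lem:dodgy1} directly is cleaner.)

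Summing over $i$, the reduction of $\Trace_{L/K}Q$ in $\widetilde{A}(\F_M)$ equals $\sum_{i=1}^m\widetilde{Q}=m\widetilde{Q}$, which is also the image in $\widetilde{A}(\F_M)$ of the reduction of $mQ\in A(L)=\cA(\OO_L)$ (reduction being a homomorphism). Hence $\Trace_{L/K}Q-mQ\in\cA(\OO_L)$ has trivial image in $\widetilde{A}(\F_M)$; since $\widetilde{A}(\F_K)\to\widetilde{A}(\F_M)$ is injective, it already has trivial image in $\widetilde{A}(\F_K)$, i.e.\ it lies in the kernel of reduction $\cA(\OO_L)\to\widetilde{A}(\F_K)$. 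As the maximal ideal of $\OO_L$ is $(\Pi)$, this is precisely the assertion $\Trace_{L/K}Q\equiv mQ\pmod{\Pi}$.

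The bulk of the content is really Lemma~\ref{lem:dodgy1}, so I expect no single computation to be the obstacle; the point to be careful about is the functoriality of the reduction map along the embeddings $\sigma_i\colon L\hookrightarrow M$ — namely that $\sigma_i$ induces the canonical inclusion on residue fields (where Lemma~\ref{lem:dodgy1} is used) and that $A(L)=\cA(\OO_L)$ with reduction a group homomorphism. A minor subtlety worth flagging is that $\Trace_{L/K}Q$ and $mQ$ are compared inside $A(M)$, where congruence modulo $\mm_M$ restricts on $A(L)$ to congruence modulo $(\Pi)$, so the two readings of ``$\pmod{\Pi}$'' coincide.
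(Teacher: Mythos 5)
Your proof is correct, and it rests on the same underlying mechanism as the paper's: Lemma~\ref{lem:dodgy1} forces each embedding $\sigma_i$ to induce the canonical map on residue fields, hence all conjugates $\sigma_i(Q)$ have the same reduction modulo the maximal ideal of $\OO_M$; summing gives $\Trace_{L/K}Q\equiv mQ$ there, and one descends to a congruence modulo $(\Pi)$ because both points lie in $A(L)$. Where you genuinely diverge is in how integrality and reduction of the points are handled. The paper first reduces to the case $L=K(Q)$ so that the orbit $\{Q_1,\dotsc,Q_m\}$ consists of distinct points forming a closed $K$-point of $A$, extends that to an $\OO_K$-point of the abelian scheme $\cA$, and applies Lemma~\ref{lem:dodgy1} coordinatewise in an affine patch; the general case is then recovered by transitivity of the trace through $L'=K(Q)$. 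You avoid both the case split and the coordinate argument by invoking $A(F)=\cA(\OO_F)$ for every finite extension $F/K$ (properness and separatedness of $\cA$ over $\OO_K$) together with functoriality of sections, so that reduction is a group homomorphism $\cA(\OO_F)\to\widetilde{A}(\F_F)$ compatible with the maps $\Spec\OO_M\to\Spec\OO_L$ induced by the $\sigma_i$; the only extra facts you need — this identification, and the injectivity of $\widetilde{A}(\F_K)\to\widetilde{A}(\F_M)$ — are standard and correctly justified. The net effect is a uniform, slightly more conceptual argument with no distinctness issues for the conjugates, whereas the paper's more hands-on route stays at the level of congruences between coordinates at the cost of the detour through $L=K(Q)$ and the trace-transitivity step.
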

\begin{proof}
We first prove \eqref{eqn:trace} under
the additional assumption that $L=K(Q)$.
Let $Q_i=\sigma_i(Q) \in A(M)$ with $Q=Q_1$. 
The assumption $L=K(Q)$ ensures
$Q_1,\dotsc,Q_m$ are distinct
as well as being a single Galois
orbit over $K$, and so allows
us to interpret the $m$-tuple
$\{Q_1,\dotsc,Q_m\}$ as a closed
$K$-point on $A$.
As $A$ has good reduction, it extends to an abelian
scheme $\cA$ over $\Spec(\OO_K)$, and the closed
	$K$-point $\{Q_1,\dotsc,Q_m\}$
	extends to a $\Spec(\OO_K)$-point 
	on $\cA$ that we denote by $\cQ$.
We take an affine patch $\Spec(\OO_K[x_1,\dotsc,x_n]/(f_1,\dotsc,f_r))$
	of $\cA$
	containing $\cQ$. In this patch we can identify $Q$ with 
	a point $Q=(q_1,\dotsc,q_n) \in \OO_L^n$
	satisfying $f_1(q_1,\dotsc,q_n)=\cdots=f_r(q_1,\dotsc,q_n)=0$.
	Then $Q_i=(\sigma_i(q_1),\dotsc,\sigma_i(q_n))$.
	Let $\varpi$ be a uniformizing element for $M$.
	Then $\sigma_i(q_j) \equiv q_j \pmod{\varpi}$
	by Lemma~\ref{lem:dodgy1}. Thus $Q_i \equiv Q \pmod{\varpi}$.
	Hence
	\[
		\Trace_{L/K} Q = \sum_{i=1}^m Q_i 
		\equiv m Q
		\pmod{\varpi}.
	\]
	Now \eqref{eqn:trace} follows as both $\Trace_{L/K} Q$ and $mQ$
	belong to $A(L)$.

	For the general case, let $L^\prime=K(Q) \subseteq L$, $m^\prime=[L^\prime:K]$
	and $\Pi^\prime$ be a uniformizer for $L^\prime$. Then,
	by the above, 
	\[
	\Trace_{L^\prime/K} Q \equiv m^\prime Q \pmod{\Pi^\prime}.
	\]
	Therefore
	\[
	\Trace_{L/K} Q \; =\;
 \Trace_{L/L^\prime} \left( \Trace_{L^\prime/K} Q \right) 
\; \equiv\;  [L:L^\prime]  \cdot m^\prime Q \; = \; mQ \pmod{\Pi^\prime}.
	\]
	The lemma follows as $\Pi \mid (\Pi^\prime \cdot \OO_L)$.
\end{proof}


\section{Proof of Theorem~\ref{thm:main}}\label{sec:proofthmmain}
With notation and assumptions as in the
statement of Theorem~\ref{thm:main}, let
$Q \in A(L)$. 
Then $\Trace_{L/K}(Q) \in A(K)$. 
However, by assumption, $A(K)=0$,
and so
$\Trace_{L/K}(Q)=0$. 
By Lemma~\ref{lem:dodgy2}
we have
\[
	mQ \equiv \Trace_{L/K}(Q)  \pmod{\fP}.
\]
Thus $mQ \equiv 0 \pmod{\fP}$.
But, since $\fp$ is totally ramified, $\F_\fP=\F_\fp$,
	and so $A(\F_\fP)=A(\F_\fp)$. It follows from 
assumption (ii) of the statement of the theorem
that $Q \equiv 0 \pmod{\fP}$. Thus $Q \in A^1(L_\fP)$ completing the proof.

\section{Counting Cyclic Fields}\label{sec:counting}
Let $\PP$ be the set of prime numbers
and let $\cP \subseteq \PP$. Following Serre \cite{Serre},
we call $\cP$ \textbf{regular of density $\alpha>0$}
if
\begin{equation}\label{eqn:regular}
\sum_{p \in \cP} \frac{1}{p^s} \; =\; \alpha \cdot \log\left( \frac{1}{s-1} \right) \; + \; \theta_A(s)
\end{equation}
where $\theta_A$ extends to a holomorphic function on $\re(s) \ge 1$.
We call the set $\cP$ \textbf{Frobenian of density $\alpha>0$}
if there exists a finite Galois extension $L/\Q$
and a subset $\cC$ of $G=\Gal(L/\Q)$, such that
\begin{itemize}
\item $\cC$ is a union of conjugacy classes in $G$;
\item $\alpha=\# \cC/\# G$;
\item for every sufficiently large prime $p$,
we have $p \in \cP$ if and only if $\sigma_p \in \cC$
where $\sigma_p$ is a Frobenius element of $G$
corresponding to $p$. 
\end{itemize}
By the Chebotarev Density Theorem \cite[Proposition 1.5]{Serre},
if $\cP$ is Frobenian of density $\alpha>0$
then it is regular of density $\alpha>0$.

Let $\ell$ be a rational prime, and let
\begin{equation}\label{eqn:PPell}
	\PP_\ell=\{ \ell \} \cup 
	\{p \; : \; \text{$p$ is prime $\equiv 1 \pmod{\ell}$}\}.
\end{equation}
The purpose of this section is to prove the following proposition
which will be needed for the proof of Theorem~\ref{thm:cyclic}.
\begin{prop}\label{prop:Ikehara}
Let $\cP \subseteq \PP_\ell$ 
and suppose $\cP$ is regular of density $\alpha>0$.
For $X>0$ let
$\cF^{\mathrm{cyc}}_{\cP,\ell}(X)$ be the set of number fields $L$ such that
\begin{enumerate}[(i)]
\item $L$ is cyclic of degree $\ell$; 
\item the conductor of $L$ is divisible
only by primes belonging to $\cP$:
\item the conductor of $L$ is at most $X$.
\end{enumerate}
There is some $c>0$ such that
\[
	\# \cF^{\mathrm{cyc}}_{\cP,\ell}(X) \, \thicksim \,  c \cdot \frac{X}{(\log{X})^{1-\beta}}, 
\]
as $X \rightarrow \infty$, where
$\beta=\alpha \cdot (\ell-1)$.
\end{prop}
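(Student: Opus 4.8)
The plan is to write $\#\cF^{\mathrm{cyc}}_{\cP,\ell}(X)$ as a partial sum of the Dirichlet coefficients of an explicit generating series, to locate the order of the singularity of that series at $s=1$ using the hypothesis that $\cP$ is regular of density $\alpha$, and then to apply a Tauberian theorem of Ikehara--Delange type. The first step is the parametrisation by conductor. By class field theory (equivalently Kronecker--Weber), a cyclic field $L/\Q$ of degree $\ell$ corresponds to a subgroup of order $\ell$ of the character group of $(\Z/N\Z)^\times$ with $N=\Cond(L)$; that is, to a surjective Dirichlet character of order $\ell$ and conductor exactly $N$, taken up to the action $\chi\mapsto\chi^a$ of $(\Z/\ell\Z)^\times$. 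A prime $p\neq\ell$ dividing $N$ is tamely ramified, so $p\equiv1\pmod\ell$ and $p\,\|\,N$, while the wild prime $\ell$, if it ramifies, contributes $\ell^2\,\|\,N$ (respectively the $2$-part of $N$ lies in $\{4,8\}$ when $\ell=2$). In particular every conductor arising is supported on $\PP_\ell$, and counting primitive characters shows that the number of such $L$ of a given conductor $N$ depends only on the factorisation type of $N$: it is $(\ell-1)^{k-1}$ when $N$ has $k$ tame prime factors and is unramified at $\ell$, while the wild prime contributes only an extra bounded factor. Let $c_\cP(N)$ be the number of these $L$ whose conductor is divisible only by primes of $\cP$; then $c_\cP(N)\ge0$ and $\#\cF^{\mathrm{cyc}}_{\cP,\ell}(X)=\sum_{N\le X}c_\cP(N)$.

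Summing the weights above over squarefree tame parts supported on $\cP$ and treating the prime $\ell$ separately, I obtain for $\re(s)>1$
\[
F_\cP(s):=\sum_{N\ge1}\frac{c_\cP(N)}{N^{s}}
\;=\;Q_\cP(s)\prod_{\substack{p\in\cP\\ p\neq\ell}}\bigl(1+(\ell-1)p^{-s}\bigr)\;-\;\frac{1}{\ell-1},
\]
where $Q_\cP(s)$ is a short Dirichlet polynomial in $\ell^{-s}$, holomorphic and non-vanishing near $s=1$, equal to $1/(\ell-1)$ when $\ell\notin\cP$; for $\ell=2$ one works with prime discriminants, so that the product over $p\neq2$ is $\prod(1+p^{-s})$ and $Q_\cP$ absorbs the three prime discriminants $-4,8,-8$ at the prime $2$. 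Taking logarithms and using $\log(1+(\ell-1)x)=(\ell-1)x+O(x^2)$,
\[
\log\prod_{\substack{p\in\cP\\ p\neq\ell}}\bigl(1+(\ell-1)p^{-s}\bigr)\;=\;(\ell-1)\sum_{p\in\cP}p^{-s}\;+\;g_1(s)
\]
with $g_1$ holomorphic for $\re(s)>\tfrac12$. As $\cP$ is regular of density $\alpha$ we have $\sum_{p\in\cP}p^{-s}=\alpha\log\frac{1}{s-1}+\theta(s)$ with $\theta$ holomorphic on $\re(s)\ge1$, so the product equals $(s-1)^{-\beta}\,\widetilde h(s)$ with $\beta=\alpha(\ell-1)$ and $\widetilde h=\exp\bigl((\ell-1)\theta+g_1\bigr)$ holomorphic and non-vanishing on a neighbourhood of $\{\re(s)\ge1\}$; moreover $\widetilde h(1)>0$ since $\widetilde h$ is positive on the real ray $s>1$. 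Hence
\[
F_\cP(s)\;=\;\frac{g(s)}{(s-1)^{\beta}}\;+\;h(s),\qquad g:=Q_\cP\,\widetilde h,\quad h:=-\tfrac{1}{\ell-1},
\]
with $g,h$ holomorphic near $\{\re(s)\ge1\}$ and $g(1)=Q_\cP(1)\widetilde h(1)>0$.

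Finally, the coefficients $c_\cP(N)$ are non-negative, $F_\cP$ converges for $\re(s)>1$, and $\beta=\alpha(\ell-1)>0$ is not a non-positive integer; so a Tauberian theorem of Ikehara--Delange type applies to the decomposition just obtained and yields
\[
\#\cF^{\mathrm{cyc}}_{\cP,\ell}(X)=\sum_{N\le X}c_\cP(N)\;\sim\;\frac{g(1)}{\Gamma(\beta)}\cdot\frac{X}{(\log X)^{1-\beta}},
\]
which is the assertion, with $c=g(1)/\Gamma(\beta)>0$.

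The analytic input of the last two paragraphs is routine once the generating series is in hand; I expect the main work to be the elementary bookkeeping of the first step --- enumerating cyclic degree-$\ell$ extensions by conductor, in particular pinning down exactly the contribution of the wildly ramified prime $\ell$ (and of the prime $2$ when $\ell=2$) --- together with the verification that all the resulting correction terms really are holomorphic at $s=1$, so that they do not perturb the leading-order asymptotic.
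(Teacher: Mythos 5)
Your proposal is correct and follows essentially the same route as the paper: the same Euler product $\prod_{p\in\cP}\bigl(1+(\ell-1)p^{-s}\bigr)$ (with the wild prime $\ell$ handled by a harmless extra factor), the same extraction of the $(s-1)^{-\beta}$ singularity from the regularity hypothesis, and the same Delange--Ikehara Tauberian theorem. The only difference is cosmetic bookkeeping in the counting step --- you enumerate primitive order-$\ell$ Dirichlet characters up to the $(\Z/\ell\Z)^\times$-action, while the paper counts index-$\ell$ subgroups of $(\Z/n\Z)^\times$ and applies M\"obius inversion --- and both yield the identical conductor-counting formula.
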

\noindent \textbf{Remark.}
By Lemma~\ref{lem:Nn} below,
$\cF^\mathrm{cyc}_{\PP_\ell,\ell}(X)=\cF^{\mathrm{cyc}}_\ell(X)$ is the set of all degree $\ell$
cyclic number fields of conductor at most $X$. 
By Dirichlet's Theorem, 
the set $\PP_\ell$ is regular of density $1/(\ell-1)$.
The proposition is saying in this case that
\[
	\# \cF^{\mathrm{cyc}}_{\ell}(X) \; \sim \; c X
\]
as $X \rightarrow \infty$. This is in fact a theorem
of Urazbaev \cite{Urazbaev}. A proof can also 
be found in \cite[Section 2.2]{Pollack},
and a generalization to more general abelian
extensions in \cite{Wright}.
Lemmas~\ref{lem:rank},~\ref{lem:Mn},~\ref{lem:qalpha},~\ref{lem:CRT},
~\ref{lem:Nn} below are in essence well-known,
and can be found in some form or other 
scattered across the literature, e.g.
\cite[Section 1]{Maki}, \cite[Section 2.2]{Pollack}.
It however seemed more convenient to prove them
from scratch.

\bigskip

Let $G$ be a finite abelian group, for now written additively.
Let $\ell$ be a prime. We define the \textbf{$\ell$-rank of $G$}
to be the dimension of the $\F_\ell$-vector space $G/\ell G$.
\begin{lem}\label{lem:rank}
Let $r$ be the $\ell$-rank of $G$.
Then the number of subgroups of index $\ell$ in $G$ 
is $(\ell^r-1)/(\ell-1)$.
\end{lem}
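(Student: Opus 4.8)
The plan is to reduce to the structure of the elementary abelian quotient $V = G/\ell G$. Subgroups of index $\ell$ in $G$ all contain $\ell G$ (since $G/H$ has exponent $\ell$ when $[G:H]=\ell$), so by the correspondence theorem they are in bijection with subgroups of index $\ell$ in $V$. Thus it suffices to count index-$\ell$ subgroups of an $\F_\ell$-vector space $V$ of dimension $r$; equivalently, by passing to quotients, to count the hyperplanes in $V$, i.e. the codimension-$1$ subspaces.

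First I would note that hyperplanes in $V$ are exactly the kernels of nonzero linear functionals $\varphi \in V^\vee = \Hom_{\F_\ell}(V,\F_\ell)$, and two nonzero functionals have the same kernel if and only if they differ by a nonzero scalar. Hence the number of hyperplanes equals $(\# V^\vee - 1)/(\ell - 1) = (\ell^r - 1)/(\ell - 1)$. Alternatively, and perhaps more in keeping with the elementary tone of the section, one can count directly: a hyperplane is determined by choosing an ordered basis $v_1,\dots,v_{r-1}$ of it, and the number of such ordered tuples that span a common $(r-1)$-dimensional subspace, divided by the number of ordered bases of a fixed such subspace, gives $\frac{(\ell^r-1)(\ell^r-\ell)\cdots(\ell^r-\ell^{r-2})}{(\ell^{r-1}-1)(\ell^{r-1}-\ell)\cdots(\ell^{r-1}-\ell^{r-2})} = \frac{\ell^r-1}{\ell-1}$, which telescopes cleanly.

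There is essentially no obstacle here; the only thing to be careful about is the very first reduction step — verifying that every index-$\ell$ subgroup contains $\ell G$ — which is immediate since if $[G:H]=\ell$ then $G/H \cong \Z/\ell\Z$, so $\ell g \in H$ for all $g \in G$. The dual-space argument is the cleanest route and is what I would write up.
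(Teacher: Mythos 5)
Your proposal is correct and follows essentially the same route as the paper: reduce to the elementary abelian quotient $G/\ell G \cong \F_\ell^r$ via the observation that index-$\ell$ subgroups contain $\ell G$, then count hyperplanes by duality (your "nonzero functionals up to scalar" is precisely the paper's identification of hyperplanes with points of the dual projective space $\check{\PP}^{r-1}(\F_\ell)$). No gaps; the write-up via $V^\vee$ is fine as is.
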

\begin{proof}
Any subgroup $H$ of $G$ of index $\ell$ contains $\ell G$.
Thus there is a $1-1$ correspondence between
subgroups of index $\ell$ in $G$ and subgroups of index
$\ell$ in $G/\ell G$,
or equivalently $\F_\ell$-subspaces
of $G/\ell G$ of codimension $1$.
But, regarded as an $\F_\ell$-vector space,
 $G/\ell G$ is isomorphic to $\F_\ell^r$. 
The codimension $1$ subspaces of $\F_\ell^r$
correspond to
points in $\check{\PP}^{r-1}(\F_\ell)$,
where $\check{\PP}^{r-1}$ denotes
the projective space dual to $\PP^{r-1}$.
However, $\check{\PP}^{r-1} \cong \PP^{r-1}$.
The lemma follows.
\end{proof}

Let $M(n)$ denote the number of degree $\ell$ cyclic 
fields contained in $\Q(\zeta_n)$. Let $N(n)$ 
denote the number of degree $\ell$ cyclic fields
of conductor $n$. Then
\begin{equation}\label{eqn:condsum}
	M(n)=\sum_{d \mid N} N(d).
\end{equation}
\begin{lem}\label{lem:Mn}
Let $n$ be a positive integer.
Write $r_\ell(n)$ for the $\ell$-rank of $(\Z/n\Z)^\times$.
Then
\[
	M(n)=\frac{\ell^{r_\ell(n)}-1}{\ell-1}.
\]
\end{lem}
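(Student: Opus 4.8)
The plan is to identify degree $\ell$ cyclic subfields of $\Q(\zeta_n)$ with index-$\ell$ subgroups of $\Gal(\Q(\zeta_n)/\Q) \cong (\Z/n\Z)^\times$ via the Galois correspondence, and then apply Lemma~\ref{lem:rank}. First I would recall that the Kronecker--Weber theorem, or simply the theory of cyclotomic fields, gives a canonical isomorphism $\Gal(\Q(\zeta_n)/\Q) \cong (\Z/n\Z)^\times$. Under the fundamental theorem of Galois theory, the subfields $F$ of $\Q(\zeta_n)$ with $[F:\Q]=\ell$ correspond bijectively to the subgroups $H \subseteq (\Z/n\Z)^\times$ with $[(\Z/n\Z)^\times : H] = \ell$; moreover, since $(\Z/n\Z)^\times$ is abelian, every such $F$ is automatically Galois over $\Q$ with cyclic quotient group of order $\ell$, so "degree $\ell$ cyclic field contained in $\Q(\zeta_n)$" is exactly the same as "subfield of degree $\ell$". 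Hence $M(n)$ equals the number of index-$\ell$ subgroups of $(\Z/n\Z)^\times$.

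Next I would invoke Lemma~\ref{lem:rank} with $G = (\Z/n\Z)^\times$: writing $r = r_\ell(n)$ for the $\ell$-rank of $(\Z/n\Z)^\times$, the number of index-$\ell$ subgroups is $(\ell^{r}-1)/(\ell-1)$. Combining this with the previous paragraph gives $M(n) = (\ell^{r_\ell(n)}-1)/(\ell-1)$, which is precisely the claimed formula. One should note the edge case where $r_\ell(n)=0$, i.e.\ $\ell \nmid \#(\Z/n\Z)^\times$: then the formula correctly yields $M(n)=0$, consistent with the fact that $(\Z/n\Z)^\times$ has no subgroup of index $\ell$ in that case.

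I do not anticipate a genuine obstacle here: the only point requiring a moment's care is the claim that every degree-$\ell$ subfield of the abelian extension $\Q(\zeta_n)/\Q$ is itself cyclic over $\Q$, which is immediate since subgroups of abelian groups are normal and quotients of abelian groups by index-$\ell$ subgroups are cyclic of order $\ell$. The rest is a direct translation through the Galois correspondence followed by a citation of Lemma~\ref{lem:rank}.
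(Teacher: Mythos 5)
Your proposal is correct and follows the same route as the paper: identify $M(n)$ with the number of index-$\ell$ subgroups of $\Gal(\Q(\zeta_n)/\Q)\cong(\Z/n\Z)^\times$ via the Galois correspondence, then apply Lemma~\ref{lem:rank}. Your extra remarks (that degree-$\ell$ subfields of an abelian extension are automatically cyclic, and the $r_\ell(n)=0$ edge case) merely make explicit what the paper leaves implicit.
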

\begin{proof}
By the Galois correspondence, $M(n)$ 
is the number of index $\ell$
subgroups in 
\[
	\Gal(\Q(\zeta_n)/\Q) \cong (\Z/n\Z)^\times.
\]
The lemma follows from
Lemma~\ref{lem:rank}.
\end{proof}
\begin{lem}\label{lem:qalpha}
Let $q$ be a prime and $\alpha \ge 1$. Then
\[
r_\ell(q^{\alpha})=
\begin{cases}
1 & \text{if $q \equiv 1 \pmod{\ell}$}\\
1 & \text{if $q=\ell \ne 2$ and $\alpha \ge 2$}\\
1 & \text{if $q=\ell=2$ and $\alpha=2$}\\
2 & \text{if $q=\ell=2$ and $\alpha \ge 3$}\\
0 & \text{in all other cases}.
\end{cases}
\]
\end{lem}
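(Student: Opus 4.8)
The plan is to compute $r_\ell(q^\alpha)$ directly from the structure of the unit group $(\Z/q^\alpha\Z)^\times$ as an abelian group, since the $\ell$-rank of a finite abelian group $G$ only depends on its invariant-factor (or elementary-divisor) decomposition: if $G \cong \prod_i \Z/n_i\Z$ then $r_\ell(G)$ is the number of indices $i$ with $\ell \mid n_i$. So I would first recall the classical description of $(\Z/q^\alpha\Z)^\times$: for odd $q$ it is cyclic of order $q^{\alpha-1}(q-1)$; for $q=2$ it is trivial if $\alpha=1$, cyclic of order $2$ if $\alpha=2$, and $\Z/2\Z \times \Z/2^{\alpha-2}\Z$ if $\alpha \ge 3$.

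First I would handle the case $q$ odd. Then $(\Z/q^\alpha\Z)^\times$ is cyclic, so its $\ell$-rank is $1$ if $\ell$ divides $q^{\alpha-1}(q-1)$ and $0$ otherwise. Now $\ell \mid q^{\alpha-1}(q-1)$ iff $\ell = q$ with $\alpha \ge 2$, or $\ell \mid q-1$, i.e. $q \equiv 1 \pmod \ell$. (Note $q \equiv 1 \pmod \ell$ forces $q \ne \ell$, so these cases are disjoint.) This yields the first and second lines of the statement, together with $r_\ell(q^\alpha) = 0$ in all remaining odd-$q$ cases. One bookkeeping point: when $q = \ell = 2$ is excluded here, which is why the $q=\ell$ line in the statement carries the hypothesis $\ell \ne 2$.

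Next I would treat $q = 2$. If $\ell$ is odd, then $\ell$ is coprime to the order of the $2$-group $(\Z/2^\alpha\Z)^\times$, so $r_\ell(2^\alpha) = 0$; this is absorbed into the ``all other cases'' line (and consistency with $q \equiv 1 \pmod\ell$ never occurs since $2 \not\equiv 1 \pmod\ell$ for odd $\ell$). If $\ell = 2$: for $\alpha = 1$ the group is trivial so $r_2 = 0$ (``all other cases''); for $\alpha = 2$ it is $\Z/2\Z$, giving $r_2 = 1$; for $\alpha \ge 3$ it is $\Z/2\Z \times \Z/2^{\alpha-2}\Z$, and since $2^{\alpha-2}$ is even, both factors contribute, giving $r_2 = 2$. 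This matches the third and fourth lines.

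There is no real obstacle here — the proof is just an organized case-check against the known structure of $(\Z/q^\alpha\Z)^\times$. The only thing requiring a little care is making sure the case list in the statement is exhaustive and the cases are mutually exclusive (in particular that $q \equiv 1 \pmod \ell$ and $q = \ell$ cannot overlap, and that the $q = \ell = 2$ subcases are correctly split at $\alpha = 2$ versus $\alpha \ge 3$), so that each $(q,\alpha)$ lands in exactly one line.
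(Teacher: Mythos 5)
Your proof is correct and follows essentially the same route as the paper: reduce to the known structure of $(\Z/q^\alpha\Z)^\times$ (cyclic of order $q^{\alpha-1}(q-1)$ for odd $q$, and the explicit description for $q=2$) and read off the $\ell$-rank case by case. The extra bookkeeping you include (disjointness of the cases, the $q=\ell=2$ exclusion) is just a more detailed writing-out of the paper's argument.
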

\begin{proof}
If $q \ne 2$ then $(\Z/q^{\alpha} \Z)^\times$
is cyclic of order $(q-1)q^{\alpha-1}$.
Thus $r_\ell(q^{\alpha})=0$ unless
$q \equiv 1 \pmod{\ell}$ or $q=\ell$ and $\alpha \ge 2$,
in which case $r_\ell(q^{\alpha})=1$.

Suppose $q=2$. Then 
\[
(\Z/2^\alpha \Z)^\times \cong
\begin{cases}
0 & \alpha=1\\
\Z/2\Z & \alpha=2\\
(\Z/2\Z) \times (\Z/2^{\alpha-2}\Z) & \alpha \ge 3.
\end{cases}
\]
The lemma follows.
\end{proof}
\begin{lem}\label{lem:CRT}
If $m_1$, $m_2$ are positive integers and $\gcd(m_1,m_2)=1$
then
\[
	r_\ell(m_1 m_2)=r_\ell(m_1)+r_\ell(m_2).
\]
\end{lem}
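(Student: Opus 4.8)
The plan is to use the Chinese Remainder Theorem at the level of unit groups, and then reduce modulo $\ell$. First I would recall that for coprime $m_1,m_2$ there is a ring isomorphism $\Z/m_1m_2\Z \cong (\Z/m_1\Z) \times (\Z/m_2\Z)$, hence an isomorphism of abelian groups
\[
	(\Z/m_1m_2\Z)^\times \;\cong\; (\Z/m_1\Z)^\times \times (\Z/m_2\Z)^\times.
\]
Write $G = (\Z/m_1m_2\Z)^\times$, $G_i = (\Z/m_i\Z)^\times$, so $G \cong G_1 \times G_2$. The $\ell$-rank of a finite abelian group $G$ was defined as $\dim_{\F_\ell} G/\ell G$.

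The key step is then the observation that forming $G/\ell G$ commutes with finite direct products: since $\ell(G_1 \times G_2) = \ell G_1 \times \ell G_2$, we get
\[
	(G_1 \times G_2)/\ell(G_1 \times G_2) \;\cong\; (G_1/\ell G_1) \times (G_2/\ell G_2)
\]
as $\F_\ell$-vector spaces. Taking dimensions, and using that the dimension of a direct sum of vector spaces is the sum of the dimensions, yields $r_\ell(m_1 m_2) = r_\ell(m_1) + r_\ell(m_2)$, which is exactly the claim.

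There is essentially no obstacle here: the statement is the additivity of a rank function under the CRT decomposition, and both ingredients (CRT for unit groups, and the exactness of $-/\ell(-)$ on finite products) are elementary. The only point requiring a moment's care is that one is quotienting by $\ell G$ rather than taking an $\ell$-torsion subgroup, so one should phrase things via $G/\ell G$ throughout to make the direct-product compatibility transparent; alternatively, since these are finite abelian groups, $\dim_{\F_\ell} G/\ell G$ equals the number of invariant factors of $G$ divisible by $\ell$, and additivity follows from uniqueness of the invariant-factor (or primary) decomposition. I would present the $G/\ell G$ argument as it is the cleanest.
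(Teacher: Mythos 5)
Your argument is correct and is essentially the paper's own proof: the paper likewise invokes the Chinese Remainder Theorem to split the unit group as $(\Z/m_1\Z)^\times \times (\Z/m_2\Z)^\times$ and concludes immediately. You have merely made explicit the (routine) compatibility of $G \mapsto G/\ell G$ with direct products, which the paper leaves implicit.
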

\begin{proof}
By the Chinese Remainder Theorem, $(\Z/m_1 m_2 \Z)^\times \cong (\Z/m_1 \Z)^\times \times
	(\Z/m_2 \Z)^\times$. The lemma follows.
\end{proof}
\begin{lem}\label{lem:Nn}
Let $n$ be the conductor of a cyclic
field of degree $\ell$. Then
\begin{equation}\label{eqn:condfact}
n=\ell^v \cdot \prod_{i=1}^t q_i
\end{equation}
where $q_1,\dotsc,q_t$ are distinct primes
$\equiv 1 \pmod{\ell}$ and 
	\begin{equation}\label{eqn:vee}
v=\begin{cases}
\text{$0$ or $2$} & \text{if $\ell \ne 2$}\\
\text{$0$, $2$ or $3$} & \text{if $\ell=2$}.
\end{cases}
	\end{equation}
Moreover,
\[
N(n)=
\begin{cases}
(\ell-1)^{t-1} & \text{if $v=0$}\\
(\ell-1)^t & \text{if $v=2$}\\
\ell (\ell-1)^t & \text{if $\ell=2$ and $v=3$}.
\end{cases}
\]
\end{lem}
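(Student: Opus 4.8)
The plan is to combine the factorization of conductors of cyclic degree $\ell$ fields with the arithmetic of $\ell$-ranks established in the preceding lemmas. First I would recall that a cyclic degree $\ell$ field $L$ corresponds, via the Kronecker--Weber theorem and Galois theory, to an index $\ell$ subgroup of $(\Z/n\Z)^\times$ where $n$ is the conductor; the conductor is characterised as the smallest such $n$, equivalently the product of the ramified primes raised to appropriate exponents. For a prime $q \neq \ell$, ramification in a degree $\ell$ extension forces $q$ to contribute exactly $q^1$ (tame ramification, since $\ell \nmid q$), and such a $q$ can ramify only if $\ell \mid q-1$, i.e.\ $q \equiv 1 \pmod{\ell}$; this is exactly the condition $r_\ell(q) = 1$ from Lemma~\ref{lem:qalpha}. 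For the prime $\ell$ itself, the possible exponents $v$ in \eqref{eqn:vee} come directly from Lemma~\ref{lem:qalpha}: the exponent $v$ must be chosen so that $\ell^v$ is the conductor of the (possibly trivial) $\ell$-part of the character cutting out $L$, giving $v \in \{0,2\}$ for $\ell \neq 2$ and $v \in \{0,2,3\}$ for $\ell = 2$ (note $v=1$ is impossible since $(\Z/\ell\Z)^\times$ has order prime to $\ell$, and for $\ell=2$ one checks $(\Z/2\Z)^\times$ is trivial so $v=1$ is likewise excluded). This establishes \eqref{eqn:condfact} and \eqref{eqn:vee}.

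Next I would compute $N(n)$. The key observation is that $N(n)$ counts index $\ell$ subgroups $H$ of $(\Z/n\Z)^\times$ whose conductor — the smallest modulus through which the quotient character factors — is exactly $n$, equivalently those $H$ that do \emph{not} arise by pullback from $(\Z/n'\Z)^\times$ for any proper divisor $n'$ of $n$ of the same shape. Using Lemma~\ref{lem:CRT}, with $n = \ell^v \prod_{i=1}^t q_i$ we have
\[
	r_\ell(n) = r_\ell(\ell^v) + \sum_{i=1}^t r_\ell(q_i) = r_\ell(\ell^v) + t,
\]
and by Lemma~\ref{lem:Mn}, $M(n) = (\ell^{r_\ell(n)} - 1)/(\ell-1)$. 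The relation \eqref{eqn:condsum} then lets me recover $N(n)$ from the $M$-values by Möbius-type inversion over the divisor lattice of $n$. Concretely, a degree $\ell$ cyclic field inside $\Q(\zeta_n)$ has conductor dividing $n$, and its conductor is supported on exactly the primes where the corresponding character ramifies; so $N(n)$ is an alternating sum of $M$ over sub-products. The cleanest route is to count directly: an index $\ell$ subgroup of $(\Z/n\Z)^\times \cong (\Z/\ell^v\Z)^\times \times \prod_i (\Z/q_i\Z)^\times$ has conductor $n$ precisely when the associated surjection to $\Z/\ell\Z$ is nontrivial on each cyclic factor $(\Z/q_i\Z)^\times$ and, when $v > 0$, nontrivial on (the $\ell$-part of) $(\Z/\ell^v\Z)^\times$ as well.

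For that direct count I would argue as follows. A character $\chi: (\Z/n\Z)^\times \to \Z/\ell\Z$ is determined by its restriction to each factor; the number of surjective maps from a cyclic group with $\ell$-rank $1$ onto $\Z/\ell\Z$ is $\ell - 1$, and the subgroup $H = \ker\chi$ determines $\chi$ up to the $(\ell-1)$ automorphisms of $\Z/\ell\Z$. When $v = 0$, there are $t$ factors, each of $\ell$-rank $1$, contributing $(\ell-1)^t$ surjections with all components nontrivial; dividing by $\ell-1$ gives $N(n) = (\ell-1)^{t-1}$. When $v = 2$ (any $\ell$), the factor $(\Z/\ell^2\Z)^\times$ also has $\ell$-rank $1$, so there are $t+1$ factors and $(\ell-1)^{t+1}$ such surjections, giving $N(n) = (\ell-1)^t$. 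When $\ell = 2$ and $v = 3$, the factor $(\Z/8\Z)^\times \cong \Z/2 \times \Z/2$ has $\ell$-rank $2$; the number of surjections $\Z/2 \times \Z/2 \to \Z/2$ that are nontrivial — i.e.\ do not factor through the unique index $2$ subgroup corresponding to conductor $4$ — must be counted carefully (there are $3$ nonzero such maps, of which $2$ are "new" at level $8$ since one factors through $\Z/4 \cong (\Z/8)^\times/\{\pm1\}$... actually one tracks conductor directly), contributing a factor of $2 = \ell$; combined with the $(2-1)^t = 1$ from the odd primes and dividing by $\ell - 1 = 1$, this yields $N(n) = 2 \cdot (\ell-1)^t = \ell(\ell-1)^t$.

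The main obstacle I anticipate is the bookkeeping at $\ell = 2$, $v = 3$: one must correctly identify which characters of $(\Z/8\Z)^\times$ have conductor exactly $8$ rather than $4$ or $1$, since $(\Z/8\Z)^\times$ has a non-cyclic structure and the conductor is not simply read off from the $\ell$-rank. I would handle this by listing the three subgroups of $(\Z/8\Z)^\times$ of index $2$ explicitly — namely $\{1,3\}$, $\{1,5\}$, $\{1,7\}$ — and checking which fail to be pulled back from $(\Z/4\Z)^\times$ (the subgroup $\{1,5\} = \ker\big((\Z/8\Z)^\times \to (\Z/4\Z)^\times\big)$ pairs up with the level-$4$ datum, leaving exactly the right count). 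All the other cases reduce to the elementary count of nonzero linear functionals on an $\F_\ell$-vector space, already packaged in Lemma~\ref{lem:rank}, so no genuine difficulty arises there.
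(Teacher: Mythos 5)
Your proposal is correct, but it takes a genuinely different route from the paper. The paper never invokes Kronecker--Weber or conductors of characters directly: it applies M\"obius inversion to the identity $M(n)=\sum_{d\mid n}N(d)$, rewrites $(\ell-1)N(n)$ as the Dirichlet convolution $(\mu*g)(n)$ with $g(m)=\ell^{r_\ell(m)}$, uses multiplicativity (Lemma~\ref{lem:CRT}) to reduce to prime powers, and then reads off both the shape \eqref{eqn:condfact} of $n$ (from the nonvanishing of each local factor $(\mu*g)(q^\alpha)$) and the value of $N(n)$ from the local computations of Lemma~\ref{lem:qalpha}; the awkward case $\ell=2$, $v=3$ is absorbed painlessly into the single evaluation $(\mu*g)(2^3)=2$. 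You instead count primitive order-$\ell$ characters factor by factor of $(\Z/n\Z)^\times$ and divide by the $\ell-1$ generators of the character group, which is more conceptual and makes the ramification-theoretic meaning of each local factor visible, but it leans on the standard fact that the conductor of $L$ is the least $n$ with $L\subseteq\Q(\zeta_n)$ (equivalently the conductor of any nontrivial character of $\Gal(L/\Q)$), and it requires the explicit bookkeeping at $2^3$ that you describe. Two points to tighten when writing it up: (1) you exclude $v=1$ but should also justify the upper bound on $v$ (for $\ell$ odd, $r_\ell(\ell^\alpha)=1$ for all $\alpha\ge 2$ and reduction is surjective on the exponent-$\ell$ quotients, so every order-$\ell$ character mod $\ell^v$ is pulled back from $\ell^2$; similarly $r_2(2^\alpha)=2$ stabilizes for $\alpha\ge 3$, ruling out conductor $2^\alpha$ with $\alpha\ge 4$); (2) your parenthetical that one of the three nontrivial characters mod $8$ ``factors through $\Z/4\cong(\Z/8)^\times/\{\pm1\}$'' is garbled --- the correct statement, which your final explicit check with the subgroups $\{1,3\},\{1,5\},\{1,7\}$ does give, is that exactly the character with kernel $\{1,5\}$ is pulled back from $(\Z/4\Z)^\times$, leaving $2=\ell$ characters of conductor exactly $8$.
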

\begin{proof}
Applying M\"{o}bius inversion to \eqref{eqn:condsum}
we have
\[
N(n)=\sum_{d \mid n} 
\mu\left(\frac{n}{d}\right) \cdot M(d).
\]
From Lemma~\ref{lem:Mn}, and using the fact that $\sum_{d \mid n}
\mu(n/d)=0$ for $n>1$ we have
\begin{equation}\label{eqn:conv}
N(n)=\frac{1}{\ell-1} \sum_{d \mid n}
\mu\left(\frac{n}{d}\right) \cdot \ell^{r_\ell(d)}.
\end{equation}
Now the function $g(m):= \ell^{r_\ell(m)}$ 
is multiplicative by Lemma~\ref{lem:CRT}.
Therefore the convolution $\mu * g$ is 
also multiplicative. 
Note that  \eqref{eqn:conv} 
may be re-expressed as $(\ell-1) N(n)=(\mu * g)(n)$.
Thus
\[
(\ell -1 ) N(n) \; = \; \prod_{q^\alpha \mid \mid n} (\mu *g)(q^\alpha),
\]
where the product is taken over prime powers $q^\alpha$
dividing $n$ exactly. In particular, since $n$
is the conductor of a cyclic degree $\ell$ field, $N(n) \ne 0$,
and so $(\mu *g)(q^\alpha) \ne 0$ for all $q^\alpha \mid \mid n$.

Now let $q \ne \ell$ and $\alpha \ge 1$. 
Then
\[
(\mu*g)(q^\alpha)=\ell^{r_\ell(q^{\alpha})}-\ell^{r_\ell(q^{\alpha-1})}=
\begin{cases}
\ell -1 & \text{if $q \equiv 1 \pmod{\ell}$ and $\alpha=1$,}\\
0 & \text{if $q \not\equiv 1 \pmod{\ell}$ or $\alpha \ge 2$}\\
\end{cases}
\]
by Lemma~\ref{lem:qalpha}. 
It follows that $n$
satisfies \eqref{eqn:condfact} where the $q_i$
are distinct primes $\equiv 1 \pmod{\ell}$ and
that
\[
N(n)=(\ell-1)^{t-1} \cdot (\mu*g)(\ell^v).
\]
Finally
\[
(\mu*g)(\ell^v) \; = \; \begin{cases}
1 & \text{if $v=0$}\\
\ell-1 & \text{if $v=2$}\\
\ell^2-\ell & \text{if $\ell=2$ and $v=3$}\\
0 & \text{in all other cases,}\\
\end{cases}
\]
again from Lemma~\ref{lem:qalpha}. This completes the proof.
\end{proof}

\begin{lem}\label{lem:Ikehara}
Let $\ell$ be a prime.
Let $\cP \subseteq \PP$ be regular of density $\alpha>0$.
Suppose that all primes in $\cP$
are $\equiv 1 \pmod{\ell}$.
Let $\cB$ be the set of all squarefree positive
integers with prime divisors belonging entirely to $\cP$.
Denote by $\omega(n)$ the number of distinct prime
divisors of an integer $n$.
Then there is some $c>0$ such that
\[
\sum_{\substack{n \in \cB \\ n \le X}} (\ell-1)^{\omega(n)}
\;
\thicksim
\;
c \cdot \frac{X}{(\log{X})^{1-\beta}}
\]
as $X \rightarrow \infty$, where $\beta=\alpha \cdot (\ell-1)$.
\end{lem}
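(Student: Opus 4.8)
The plan is to package the sum into a Dirichlet series and then read off the asymptotics of its partial sums from a Tauberian theorem. Set
\[
	F(s) \; = \; \sum_{n \in \cB} \frac{(\ell-1)^{\omega(n)}}{n^s},
\]
a series with non-negative coefficients. Since every $n \in \cB$ is squarefree and built from primes in $\cP$, the arithmetic function $n \mapsto (\ell-1)^{\omega(n)}$ (extended by $0$ outside $\cB$) is multiplicative and concentrated on squarefree integers, so $F$ has an Euler product
\[
	F(s) \; = \; \prod_{p \in \cP} \left( 1 + \frac{\ell-1}{p^s} \right),
\]
absolutely convergent on $\re(s) > 1$. The hypothesis that every $p \in \cP$ satisfies $p \equiv 1 \pmod{\ell}$ forces $p \ge \ell+1 > \ell-1$, so $|(\ell-1)p^{-s}| < 1$ throughout $\re(s) \ge 1$; in particular each Euler factor is non-vanishing there, which legitimises the logarithmic manipulation below.

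Next I would pass to logarithms, using the principal branch on each factor, and split off the linear term:
\[
	\log F(s) \; = \; (\ell-1) \sum_{p \in \cP} \frac{1}{p^s} \; + \; E(s), \qquad E(s) \; = \; \sum_{p \in \cP} \Bigl( \log\bigl(1 + (\ell-1)p^{-s}\bigr) - (\ell-1)p^{-s} \Bigr).
\]
The series $E(s)$ converges absolutely and locally uniformly on $\re(s) > 1/2$, hence is holomorphic there. Since $\cP$ is regular of density $\alpha$, we substitute $\sum_{p \in \cP} p^{-s} = \alpha \log\bigl(1/(s-1)\bigr) + \theta_A(s)$ with $\theta_A$ holomorphic on $\re(s) \ge 1$. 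Exponentiating and setting $\beta = \alpha(\ell-1)$ gives, for $\re(s) > 1$,
\[
	F(s) \; = \; \frac{g(s)}{(s-1)^{\beta}}, \qquad g(s) \; = \; \exp\bigl( (\ell-1)\theta_A(s) + E(s) \bigr),
\]
where $g$ is holomorphic and nowhere zero on a neighbourhood of $\{\re(s) \ge 1\}$; note $g(1) \ne 0$, and $\beta > 0$ since $\alpha > 0$ and $\ell \ge 2$. By analytic continuation $F$ extends to a neighbourhood of $\{\re(s) \ge 1\}$ whose only non-holomorphic feature at $s = 1$ is the factor $(s-1)^{-\beta}$.

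With this in hand I would invoke an Ikehara--Delange Tauberian theorem — the version tailored to regular sets appears in \cite{Serre}, following Delange — which converts this analytic information into
\[
	\sum_{\substack{n \in \cB \\ n \le X}} (\ell-1)^{\omega(n)} \; \sim \; \frac{g(1)}{\Gamma(\beta)} \cdot X (\log X)^{\beta - 1} \; = \; \frac{g(1)}{\Gamma(\beta)} \cdot \frac{X}{(\log X)^{1-\beta}},
\]
which is the assertion with $c = g(1)/\Gamma(\beta) > 0$.

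The step I expect to be the main obstacle is pinning down the Tauberian input: one needs a statement valid for an arbitrary real exponent $\beta > 0$, covering uniformly both the case where $\beta$ is a positive integer (so $(s-1)^{-\beta}$ is a pole) and where it is not (a genuine branch point), and one must check that the clause ``$\theta_A$ holomorphic on $\re(s)\ge 1$'' in the definition of \emph{regular} supplies exactly the boundary regularity the theorem demands — some formulations ask for holomorphy on an open neighbourhood of the closed half-plane, others for a suitable continuity statement up to the line $\re(s)=1$. Serre's formulation can be quoted essentially verbatim here; the remaining verifications — holomorphy of $E$ on $\re(s) > 1/2$, non-vanishing of $g$, and the routine bookkeeping in the Euler product — present no difficulty.
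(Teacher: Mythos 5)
Your proposal is correct and follows essentially the same route as the paper: the Euler product for $\sum_{n\in\cB}(\ell-1)^{\omega(n)}n^{-s}$, the logarithmic splitting with an error term holomorphic on $\re(s)>1/2$, substitution of the regularity hypothesis to obtain $\Phi(s)(s-1)^{-\beta}$ with $\Phi$ holomorphic and non-vanishing on $\re(s)\ge 1$, and the Delange--Ikehara Tauberian theorem (the paper cites the version in Tenenbaum, Theorem 7.28) to extract the asymptotic with $c=\Phi(1)/\Gamma(\beta)$. The only cosmetic difference is that the paper records $0<\beta\le 1$ from $\alpha\le 1/(\ell-1)$, whereas you justify non-vanishing of the Euler factors via $p\ge\ell+1$; both are fine.
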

\begin{proof}
Consider the Dirichlet series
\[
D(s) :=  \sum_{n \in \cB} \frac{(\ell-1)^{\omega(n)}}{n^s}=\prod_{p \in \cP} \left(1+\frac{\ell-1}{p^s} \right).
\]
Then
\[
\log{D(s)}=\sum_{p \in \cP} \frac{\ell-1}{p^s} + \theta(s)
\]
where $\theta$ is holomorphic on $\re(s) > 1/2$. 
By \eqref{eqn:regular},
\[
\log{D(s)}=\beta \cdot \log\left( \frac{1}{s-1} \right) \; + \; \phi(s)
\]
and $\phi$ is holomorphic on $\re(s) \ge 1$. Thus
\[
D(s)=\frac{\Phi(s)}{(s-1)^\beta}
\]
where $\Phi(s)=\exp(\phi(s))$ is holomorphic and non-zero
on $\re(s) \ge 1$. Since $\cP$ is contained
in the set of primes $\equiv 1 \pmod{\ell}$
we know that $0<\alpha \le 1/(\ell-1)$,
and so $0< \beta \le 1$.

We now apply to $D(s)$ a variant of  Ikehara's Tauberian theorem
due to Delange \cite[Theorem 7.28]{Tenenbaum} to obtain
\[
\sum_{\substack{n \in \cB\\ n \le X}} (\ell-1)^{\omega(n)} \;
\thicksim \;
\frac{\Phi(1)}{\Gamma(\beta)} \cdot \frac{X}{(\log{X})^{1-\beta}},
\]
where $\Gamma$ denotes the gamma function.
The lemma follows.
\end{proof}
\begin{proof}[Proof of Proposition~\ref{prop:Ikehara}]
Suppose first that $\ell \notin \cP$,
and let $\cB$ be as in the statement
of Lemma~\ref{lem:Ikehara}.
Then, by Lemma~\ref{lem:Nn},
\[
\#\cF^{\mathrm{cyc}}_{\cP,\ell}(X) \;=\; \sum_{\substack{n \in \cB\\ n \le X}} N(n)
\; = \; \frac{1}{\ell-1}\sum_{\substack{n \in \cB\\ n \le X}} (\ell-1)^{\omega(n)}.
\]
The proposition follows immediately from Lemma~\ref{lem:Ikehara}
in this case.
%
Suppose next that $\ell \in \cP$ and $\ell \ne 2$.
Let $\cP^\prime=\cP\setminus \{\ell\}$
and now let $\cB$ be the set of all squarefree positive
integers with prime divisors belonging entirely to $\cP^\prime$.
By Lemma~\ref{lem:Nn}
\[
	\# \cF_{\cP,\ell}^{\mathrm{cyc}}(X)= 
\sum_{\substack{n \in \cB \\ n \le X}} N(n)
\; + \;
\sum_{\substack{n \in \cB \\ n \le X/\ell^2}} N(\ell^2 n)
=
\sum_{\substack{n \in \cB \\ n \le X}} (\ell-1)^{\omega(n)-1} 
\; + \;
\sum_{\substack{n \in \cB \\ n \le X/\ell^2}} (\ell-1)^{\omega(n)}. 
\]
The proposition follows from Lemma~\ref{lem:Ikehara} 
in this case also.
The case $\ell=2 \in \cP$ is dealt with similarly.
\end{proof}

\section{Proof of Theorem~\ref{thm:cyclic}}\label{sec:proofthmcyclic}
Let $\ell$ be a rational prime, and let
$A/\Q$ be an abelian variety.
The following result is stated as an exercise in \cite[Section 4.6]{SerreMW}.
\begin{lem}\label{lem:WeilPairing}
$\Q(\zeta_\ell) \subseteq \Q(A[\ell])$.
\end{lem}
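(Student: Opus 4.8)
The plan is to deduce the containment $\Q(\zeta_\ell) \subseteq \Q(A[\ell])$ from the existence of a Galois-equivariant, non-degenerate pairing on the $\ell$-torsion with values in $\mu_\ell$. The key input is the Weil pairing: for a polarized abelian variety, or indeed for any abelian variety $A/\Q$ using the canonical principal polarization on $A \times A^\vee$ if one wishes to avoid a polarization hypothesis, there is a perfect alternating pairing
\[
e_\ell \; : \; A[\ell] \times A^\vee[\ell] \; \longrightarrow \; \mu_\ell
\]
which is compatible with the action of $\Gal(\overline{\Q}/\Q)$ in the sense that $e_\ell(\sigma P, \sigma P') = \sigma\big(e_\ell(P,P')\big)$ for all $\sigma$ in the Galois group. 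Since $A$ is an abelian variety over $\Q$ of positive dimension, $A[\ell]$ is a nonzero $\F_\ell$-vector space; by non-degeneracy of the pairing there exist $P \in A[\ell]$ and $P' \in A^\vee[\ell]$ with $e_\ell(P,P') = \zeta$ a primitive $\ell$-th root of unity (any nonzero value in $\mu_\ell$ is primitive since $\ell$ is prime).

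First I would fix such a pair $P, P'$ defined over $\Q(A[\ell])$ — here I use that $A^\vee[\ell]$ is also defined over $\Q(A[\ell])$, either because one works with a principal polarization identifying $A$ and $A^\vee$, or simply because the Galois action on $A^\vee[\ell]$ is the inverse-transpose of that on $A[\ell]$, so $\Q(A^\vee[\ell]) = \Q(A[\ell])$. Then for any $\sigma \in \Gal(\overline{\Q}/\Q(A[\ell]))$ we have $\sigma P = P$ and $\sigma P' = P'$, hence
\[
\sigma(\zeta) \; = \; \sigma\big(e_\ell(P,P')\big) \; = \; e_\ell(\sigma P, \sigma P') \; = \; e_\ell(P,P') \; = \; \zeta.
\]
Thus $\sigma$ fixes $\zeta_\ell$, and since this holds for every $\sigma$ fixing $\Q(A[\ell])$, Galois theory gives $\Q(\zeta_\ell) = \Q(\zeta) \subseteq \Q(A[\ell])$, which is exactly the claim.

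The only genuine subtlety — and the step I would treat most carefully — is making sure the Weil pairing is invoked in a form that applies to an arbitrary abelian variety over $\Q$, since the lemma as stated carries no polarization hypothesis. The cleanest route is to cite the standard Weil pairing $e_\ell : A[\ell] \times A^\vee[\ell] \to \mu_\ell$ (e.g. Milne's notes on abelian varieties, or Mumford), note its perfectness and Galois-equivariance, and then observe that $A^\vee[\ell]$ is rational over $\Q(A[\ell])$ because the Galois representation on the dual $\ell$-torsion is the contragredient of the one on $A[\ell]$. Everything else is formal. An alternative, if one is willing to restrict to the principally polarized case used elsewhere in the paper, is to identify $A^\vee \cong A$ via the polarization and use the resulting alternating form $A[\ell] \times A[\ell] \to \mu_\ell$ directly; but the contragredient observation makes the unrestricted statement equally painless, so I would present it that way.
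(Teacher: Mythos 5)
The Weil--pairing formalism you use is the right idea, and it is also the engine of the paper's proof, but the step where you produce a point of $A^\vee[\ell]$ fixed by $\Gal(\overline{\Q}/\Q(A[\ell]))$ has a genuine gap. Your justification is that the Galois action on $A^\vee[\ell]$ is the inverse-transpose of that on $A[\ell]$, whence $\Q(A^\vee[\ell])=\Q(A[\ell])$. That identification is incorrect: as a Galois module $A^\vee[\ell]\cong\Hom(A[\ell],\mu_\ell)$, i.e.\ the contragredient \emph{twisted by the mod $\ell$ cyclotomic character}, not $\Hom(A[\ell],\F_\ell)$. Indeed, perfectness and equivariance of $e_\ell$ show that any $\sigma$ fixing $A[\ell]$ pointwise acts on $A^\vee[\ell]$ by the scalar $\chi_\ell(\sigma)$; hence $\Q(A^\vee[\ell])\subseteq\Q(A[\ell])$ holds if and only if $\chi_\ell$ is trivial on $\Gal(\overline{\Q}/\Q(A[\ell]))$, which is exactly the statement $\zeta_\ell\in\Q(A[\ell])$ you are trying to prove. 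So, as written, the argument is circular. The parenthetical suggestion of using the canonical principal polarization on $A\times A^\vee$ does not repair this: that route only yields $\zeta_\ell\in\Q\bigl((A\times A^\vee)[\ell]\bigr)$, a field a priori larger than $\Q(A[\ell])$.

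The paper closes this gap differently (following Zarhin): take a $\Q$-polarization $\phi:A\to A^\vee$ of \emph{minimal degree}. If $A[\ell]\subseteq\ker\phi$, then $\phi$ factors through multiplication by $\ell$ and one obtains a $\Q$-polarization of smaller degree, a contradiction; hence there is $Q\in A[\ell]$ with $\phi(Q)\in A^\vee[\ell]\setminus\{0\}$. Since $\phi$ is defined over $\Q$ and $Q\in A[\ell]$, the point $\phi(Q)$ is automatically fixed by $\Gal(\overline{\Q}/\Q(A[\ell]))$, and then your equivariance computation applied to $e_\ell(P,\phi(Q))=\zeta_\ell$ finishes the proof. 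To salvage your write-up you must either restrict to the principally polarized case (which the lemma does not assume) or insert some such device producing a nonzero element of $A^\vee[\ell]$ that is visibly rational over $\Q(A[\ell])$.
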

\begin{proof}
If $A$ is principally polarized then the lemma
is a famous consequence of the properties of the Weil pairing on $A[\ell]$.
We learned the following more general argument
from a \texttt{Mathoverflow} post by Yuri Zarhin
\cite{Zarhin}.
Write $A^\vee$
for the dual abelian variety,
and let $\phi \; : \; A \rightarrow A^\vee$
be a $\Q$-polarization of smallest possible degree.
If $A[\ell] \subseteq \ker(\phi)$, then $P \mapsto \phi((1/\ell)P)$
is a well-defined $\Q$-polarization
contradicting the minimality of the degree. Thus
there is some $Q \in A[\ell]$
such that $\phi(Q) \in A^\vee[\ell] \setminus \{0\}$.
The non-degenerancy of the Weil pairing
$e_\ell : A[\ell] \times A^\vee[\ell] \rightarrow \langle \zeta_\ell \rangle$ ensures the existence of $P \in A[\ell]$ such that
$e_\ell(P,\phi(Q))=\zeta_\ell$. Now  
	$P$ and $\phi(Q)$ are fixed by $\Gal(\overline{\Q}/\Q(A[\ell]))$,
and so, by the Galois-compatibility of the Weil pairing,
$\zeta_\ell$ is also fixed by $\Gal(\overline{\Q}/\Q(A[\ell]))$. 
Thus $\zeta_\ell \in \Q(A[\ell])$. 
\end{proof}
We let $G_\ell(A)$, $H_\ell(A)$ be as in \eqref{eqn:GH},
and $\cC_\ell(A)$ as in \eqref{eqn:cC}.
We note that $\cC_\ell(A)$ is a finite union
of conjugacy classes.
We now suppose that $A$ and $\ell$ satisfy the hypotheses
of Theorem~\ref{thm:cyclic},
namely
\begin{enumerate}[(i)]
\item $A(\Q)=0$;
\item $\cC_\ell(A) \ne \emptyset$.
\end{enumerate}
Let $S$ be a finite set of rational primes.
Enlarge $S$ so that includes $\ell$ and all the primes
of bad reduction for $A$. Let $\PP_\ell$ 
be as in \eqref{eqn:PPell}.
Let
\[
	\cP=\{p \in \PP_\ell \; : \; 
	\text{$p \in S$ or $\sigma_p \notin \cC_\ell(A)$}\};
\]
here, as in Lemma~\ref{lem:equiv},
$\sigma_p \in G_\ell(A)$ denotes a Frobenius
element associated to $p$.
\begin{lem}\label{lem:Frobenian}
The set $\cP$ is Frobenian (and therefore regular)
of density
	\begin{equation}\label{eqn:density}
		\alpha:=\frac{\# H_\ell(A) - \# \cC_\ell(A)}{(\ell-1) \cdot \# H_\ell(A)}.
	\end{equation}
\end{lem}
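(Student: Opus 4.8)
The plan is to write down explicitly the Galois extension and the conjugacy-stable subset that witness $\cP$ being Frobenian, and then quote the Chebotarev Density Theorem to upgrade ``Frobenian'' to ``regular''. First I would take the finite Galois extension to be $\Q(A[\ell])/\Q$, with group $G=G_\ell(A)$, and set
\[
	\cC \; := \; H_\ell(A) \setminus \cC_\ell(A) \; \subseteq \; G_\ell(A).
\]
Since $H_\ell(A)$ is normal in $G_\ell(A)$ it is a union of conjugacy classes, and $\cC_\ell(A)$ is a union of conjugacy classes (as already observed, the property of acting freely on $A[\ell]$ is invariant under conjugation in $\GL(A[\ell])$, hence under conjugation by elements of $G_\ell(A)$); therefore $\cC$ is a union of conjugacy classes in $G_\ell(A)$.

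Next I would compute the density. The restriction map gives $G_\ell(A)/H_\ell(A) \cong \Gal(\Q(\zeta_\ell)/\Q) \cong (\Z/\ell\Z)^\times$, which has order $\ell-1$, so $\# G_\ell(A) = (\ell-1)\cdot \# H_\ell(A)$. Hence
\[
	\frac{\#\cC}{\#G_\ell(A)} \; = \; \frac{\# H_\ell(A) - \#\cC_\ell(A)}{(\ell-1)\cdot \# H_\ell(A)} \; = \; \alpha.
\]
Moreover $\alpha>0$: the identity element of $H_\ell(A)$ fixes all of $A[\ell]$, so it does not act freely, whence $1 \in \cC$ and $\cC_\ell(A) \subsetneq H_\ell(A)$.

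It then remains to match $\cP$ with $\cC$ for all sufficiently large primes. Let $p$ be large enough that $p \notin S$; since $S$ contains $\ell$ and all primes of bad reduction for $A$, such a $p$ satisfies $p \ne \ell$ and has good reduction, so $\sigma_p \in G_\ell(A)$ is well-defined. By Lemma~\ref{lem:equiv}(a), $\sigma_p \in H_\ell(A)$ if and only if $p \equiv 1 \pmod{\ell}$, i.e.\ (as $p \ne \ell$) if and only if $p \in \PP_\ell$. Since $p \notin S$, the definition of $\cP$ reduces to: $p \in \cP$ if and only if $p \in \PP_\ell$ and $\sigma_p \notin \cC_\ell(A)$, which by the preceding sentence is equivalent to $\sigma_p \in H_\ell(A) \setminus \cC_\ell(A) = \cC$. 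Thus $\cP$ is Frobenian of density $\alpha>0$, and by the Chebotarev Density Theorem \cite[Proposition 1.5]{Serre} it is therefore regular of density $\alpha$.

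There is no genuine obstacle here; the only point requiring care is the bookkeeping, namely that $\ell$ and the finitely many bad primes have been placed in $S$ so that the Frobenian condition (which is only required for all \emph{sufficiently large} $p$) is met, together with the elementary index computation $[G_\ell(A):H_\ell(A)]=\ell-1$ coming from Lemma~\ref{lem:WeilPairing}.
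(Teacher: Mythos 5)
Your proposal is correct and follows essentially the same route as the paper: identify $p\in\cP$ (for $p$ large, hence $p\notin S$) with $\sigma_p\in H_\ell(A)\setminus\cC_\ell(A)$ via Lemma~\ref{lem:equiv}(a), compute the density using $[G_\ell(A):H_\ell(A)]=\ell-1$, and invoke Chebotarev to pass from Frobenian to regular. The only difference is that you spell out the conjugacy-stability of $H_\ell(A)\setminus\cC_\ell(A)$ and the positivity of $\alpha$ (which the paper records elsewhere), so there is nothing to correct.
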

\begin{proof}
Let $p$ be a sufficiently large prime.
	By part (a) of Lemma~\ref{lem:equiv},
	we have $p \in \cP$ if and only if
	$\sigma_p \in H_\ell(A) \setminus \cC_\ell(A)$.
	Thus $\cP$ is Frobenian of density
	\[
		\frac{\# H_\ell(A) - \# \cC_\ell(A)}{\# G_\ell(A)}.
	\]
	The lemma follows as $G_\ell(A)/H_\ell(A) \cong \Gal(\Q(\zeta_\ell)/\Q)$
	has order $\ell-1$.
\end{proof}

\begin{lem}\label{lem:integral}
Let $L/\Q$ be cyclic of degree $\ell$
and suppose $(A\setminus 0)(\OO_{L,S}) \ne \emptyset$.
Then the conductor of $L$ is divisible only by primes
belonging to $\cP$.
\end{lem}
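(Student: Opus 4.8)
The plan is to show that every prime $p$ dividing the conductor of $L$ lies in $\cP$; the lemma follows immediately.

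So fix a prime $p$ dividing the conductor of $L$. First I would invoke Lemma~\ref{lem:Nn}: since $L$ is cyclic of degree $\ell$, its conductor has the shape $\ell^v \cdot \prod_{i=1}^t q_i$ with the $q_i$ distinct primes $\equiv 1 \pmod{\ell}$, so in particular $p \in \PP_\ell$. It therefore remains to exclude the possibility that $p \notin \cP$, that is, the possibility $p \notin S$ and $\sigma_p \in \cC_\ell(A)$.

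Suppose, for contradiction, that this is the case. Since $S$ was enlarged so as to contain $\ell$ together with all primes of bad reduction for $A$, the condition $p \notin S$ forces $p \ne \ell$ and forces $p$ to be a prime of good reduction for $A$. By part (b) of Lemma~\ref{lem:equiv}, the condition $\sigma_p \in \cC_\ell(A)$ then gives $p \equiv 1 \pmod{\ell}$ and $\ell \nmid \# A(\F_p)$. On the other hand, $p$ divides the conductor of $L$, so $p$ ramifies in $L$; since $[L:\Q]=\ell$ is prime, $p$ is in fact totally ramified in $L$. I would now apply Theorem~\ref{thm:main} with $K=\Q$, with the prime $\fp=p$ (of good reduction), and with $m=\ell$: hypothesis (i) holds by total ramification, hypothesis (ii) holds because $\gcd(\# A(\F_p),\ell)=1$ (as $\ell$ is prime and $\ell \nmid \# A(\F_p)$), and the standing hypothesis $A(\Q)=0$ is assumption (i) of Theorem~\ref{thm:cyclic}. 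The conclusion of Theorem~\ref{thm:main} is that $(A\setminus 0)(\OO_{L,T})=\emptyset$ for any set $T$ of places of $L$ not containing the unique prime $\fP$ above $p$. Since $p \notin S$, the set of places of $L$ lying above the primes in $S$ does not contain $\fP$, so $(A\setminus 0)(\OO_{L,S})=\emptyset$, contradicting the hypothesis of the lemma. Hence $p \in \cP$, as desired.

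I do not expect a genuine obstacle here: the argument is essentially a direct appeal to Theorem~\ref{thm:main} at the ramified prime $p$. The only points requiring care are bookkeeping ones: that $\PP_\ell$ captures \emph{all} prime divisors of the conductor (via Lemma~\ref{lem:Nn}), that the enlargement of $S$ makes ``$p\notin S$'' entail both good reduction at $p$ and $p\ne\ell$, and that ``ramified in a degree-$\ell$ extension, $\ell$ prime'' upgrades to ``totally ramified'' so that Theorem~\ref{thm:main} applies verbatim.
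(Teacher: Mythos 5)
Your proof is correct and follows essentially the same route as the paper: reduce to primes in $\PP_\ell$ via Lemma~\ref{lem:Nn}, then for a ramified prime with $\sigma_p\in\cC_\ell(A)$ and $p\notin S$ deduce total ramification and $\ell\nmid\#A(\F_p)$ (Lemma~\ref{lem:equiv}) and contradict the hypothesis via Theorem~\ref{thm:main}. If anything, you are slightly more explicit than the paper about why $p\notin S$ guarantees that $\fP$ avoids the set of places defining $\OO_{L,S}$, which is a welcome bit of bookkeeping.
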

\begin{proof}
We know from Lemma~\ref{lem:Nn} that the 
prime divisors of the conductor of $L$
belong to $\PP_\ell$.
Let $p \equiv 1 \pmod{\ell}$
be a prime of good reduction for $A$
dividing the conductor of $L$.
It is sufficient to show that $\sigma_p \notin \cC_\ell(A)$.
Suppose $\sigma_p \in \cC_\ell(A)$. 
Since $p$ divides the conductor of $L$ it is ramified
in $L$. However, $\Gal(L/\Q)$ is cyclic of order $\ell$.
As the inertia subgroup at $p$ is non-trivial
it must equal $\Gal(L/\Q)$. We deduce
that $p$ is totally ramified in $L$.
	Also, by Lemma~\ref{lem:equiv},
we have $\ell \nmid \#A(\F_p)$. Recall that $A(\Q)=0$
by assumption (i) above. We now apply Theorem~\ref{thm:main}
to conclude that $(A \setminus 0)(\OO_{L,S})=\emptyset$,
giving a contradiction.
\end{proof}

\subsection*{Proof of Theorem~\ref{thm:cyclic}}
By assumption (ii) above $\cC_\ell(A) \ne \emptyset$.
It follows from \eqref{eqn:density}
that
	$\alpha  < 1/(\ell-1)$.
	Moreover, from the definition of $\cC_\ell(A)$
	in \eqref{eqn:cC}, we note that
	$1 \in H_\ell(A)$ but $1 \notin \cC_\ell(A)$.
	It follows that $\alpha>0$.
	Lemma~\ref{lem:Frobenian} tells us that $\cP$
	is regular of density $\alpha$.
By Lemma~\ref{lem:integral},
\[
\{L \in \cF^{\mathrm{cyc}}_\ell(X) \; : \; (A\setminus 0)(\OO_L) \ne \emptyset\}
\; \subseteq \; \cF^{\mathrm{cyc}}_{\cP,\ell}(X),
\]
where $\cF^{\mathrm{cyc}}_{\cP,\ell}(X)$
is defined in Proposition~\ref{prop:Ikehara}.
By Proposition~\ref{prop:Ikehara} (see also the remark following
that proposition), there are $c_1$, $c_2>0$ such that
\[
\# \cF^{\mathrm{cyc}}_{\cP,\ell}(X) \, \thicksim \,
c_1 \cdot \frac{X}{(\log{X})^{1-\beta}}, \qquad
\# \cF^{\mathrm{cyc}}_{\ell}(X) \, \thicksim \,
c_2 \cdot X
\]
as $X \rightarrow \infty$, where 
\[
	\beta \; =\; (\ell-1)\alpha \; = \; 
		\frac{\# H_\ell(A) - \# \cC_\ell(A)}{\# H_\ell(A)}.
\]
This proves the theorem.

\bibliographystyle{abbrv}
\bibliography{samir}

\end{document}